\newtheorem{thm}{Theorem}[section]
\newtheorem{prop}[thm]{Proposition}
\newtheorem{lem}[thm]{Lemma}
\newtheorem{lem-def}[thm]{Lemma-Definition}
\newtheorem{cor}[thm]{Corollary}
\theoremstyle{definition}
\newtheorem{rmk}[thm]{Remark}
\newtheorem{dfn}[thm]{Definition}
\newcommand{\Dd}{\mathcal{D}}
\newcommand{\Gg}{\mathcal{G}}
\newcommand{\Hh}{\mathcal{H}}
\newcommand{\Ii}{\mathcal{I}}
\newcommand{\Mm}{\mathcal{M}}
\newcommand{\Oo}{\mathcal{O}}
\newcommand{\Pp}{\mathcal{P}}
\newcommand{\Tt}{\mathcal{T}}
\newcommand{\Zz}{\mathcal{Z}}
\newcommand{\IA}{\mathbb{A}}
\newcommand{\IF}{\mathbb{F}}
\newcommand{\IG}{\mathbb{G}}
\newcommand{\IQ}{\mathbb{Q}}
\newcommand{\IZ}{\mathbb{Z}}
\renewcommand{\aa}{\mathfrak{a}}
\numberwithin{equation}{section}
\DeclareMathOperator{\End}{End} 
\DeclareMathOperator{\Gr}{Gr} 
\DeclareMathOperator{\Spec}{Spec} 
\DeclareMathOperator{\identity}{id} 
\DeclareMathOperator{\redu}{red} 
\DeclareMathOperator{\Gal}{Gal} 
\DeclareMathOperator{\der}{der} 
\DeclareMathOperator{\adj}{ad} 
\DeclareMathOperator{\Aut}{Aut} 
\DeclareMathOperator{\Ad}{Ad} 
\DeclareMathOperator{\Loc}{Loc} 
\DeclareMathOperator{\Sh}{Sh} 
\newcommand{\into}{\hookrightarrow}
\newcommand{\GL}{\mathrm{GL}} 
\newcommand{\PGL}{\mathrm{PGL}} 
\newcommand{\CG}{{}^cG} 
\renewcommand{\phi}{\varphi}
\newcommand{\cInd}{\operatorname{c}\text{-}\operatorname{Ind}} 
\newcommand{\sph}{\mathrm{sph}} 
\newcommand{\vp}{\varphi}
\newcommand{\bZ}{{\mathbf Z}}
\DeclareMathOperator{\Lie}{Lie}
\begin{document}

\title[The stack of spherical Langlands parameters]{The stack of spherical Langlands parameters}
\author[Thibaud van den Hove]{Thibaud van den Hove\\with an appendix by Sean Cotner}
	
\address{Max Planck Institute for Mathematics, Vivatsgasse 7, 53111 Bonn, Germany}
\email{vandenhove@mpim-bonn.mpg.de}

\begin{abstract}
	For a reductive group over a nonarchimedean local field, we define the stack of spherical Langlands parameters, using the inertia-invariants of the Langlands dual group.
	This generalizes the stack of unramified Langlands parameters in case the group is unramified.
	We then use this stack to deduce the Eichler--Shimura congruence relations for Hodge type Shimura varieties, without restrictions on the ramification.
\end{abstract}

\maketitle

\setcounter{tocdepth}{1}
\tableofcontents
\setcounter{section}{0}


\thispagestyle{empty}

\section{Introduction}

For a reductive group \(G\) over a nonarchimedean local field \(F\), Langlands duality is governed by the Langlands dual group \(\widehat{G}\).
It arises naturally from the Satake isomorphism for spherical Hecke algebras \cite{Satake:Theory, Borel:Automorphic} and the geometric Satake equivalence \cite{MirkovicVilonen:Geometric}.
For unramified groups, this relates unramified representations with unramified Langlands parameters, i.e., semisimple elements of \(\widehat{G}(\mathbb{C})\) up to (twisted) \(\widehat{G}\)-conjugation.
For ramified groups, there are also Satake isomorphisms \cite{HainesRostami:Satake,vdH:RamifiedSatake} and geometric Satake equivalences \cite{Zhu:Ramified,Richarz:Affine}, where instead the inertia-invariants \(\widehat{G}^I\) appear.
However, when defining Langlands parameters, including the stacks of Langlands parameters as in \cite{DatHelmKurinczukMoss:Moduli, Zhu:Coherent, FarguesScholze:Geometrization}, one still takes the quotient by \(\widehat{G}\)-conjugation, even for ramified groups.
The aim of this paper is twofold:
\begin{enumerate}
	\item Explain how the inertia-invariants \(\widehat{G}^I\) appear in the stacks of Langlands parameters from \cite{DatHelmKurinczukMoss:Moduli, Zhu:Coherent,FarguesScholze:Geometrization}.
	\item Give an application to the cohomology of Shimura varieties, in the form of the Eichler--Shimura congruence relations when the group is not necessarily unramified at \(p\).
\end{enumerate}

Let us start by explaining the second point.

\subsection{Eichler--Shimura relations in the ramified case}

Fix a prime \(p\).
The Eichler--Shimura relations for the modular curve give
\(T_p = \operatorname{Frob} + \operatorname{Ver},\)
where \(T_p\) is a certain Hecke correspondence, and \(\operatorname{Frob}\) and \(\operatorname{Ver}\) are the Frobenius and Verschiebung correspondences on the \(\operatorname{mod} p\) reduction, where \(p\) is a prime of good reduction.
Consequently, Blasius--Rogawski \cite[§6]{BlasiusRogawski:Zeta} gave a conjectural generalization for general Shimura varieties \emph{with good reduction at \(p\)}, which is a statement on the level of cohomology.
Roughly speaking, it says that the action of Frobenius on the cohomology of a Shimura variety satisfies a certain Hecke polynomial with coefficients in the unramified Hecke algebra, defined via a representation of the Langlands dual group.
Much work has been devoted to proving these ``congruence relations'' in special cases, including \cite{FaltingsChai:Degeneration, Wedhorn:Congruence, Bueltel:Congruence, BueltelWedhorn:Congruence, Koskivirta:Congruence, Lee:EichlerShimura}, and finally Wu \cite{Wu:S=T} gave a proof for general Shimura varieties of Hodge type, at least for hyperspecial level at \(p\).
Recently, Daniels--van Hoften--Kim--Zhang \cite{DvHKZ:Igusa} constructed so-called Igusa stacks, and, combined with the spectral action from Fargues--Scholze \cite{FarguesScholze:Geometrization}, used them to give a different proof of the Blasius--Ragowski conjecture for Hodge type Shimura varieties.
In fact, they even allow Iwahori level at \(p\), as long as this Iwahori is contained in a hyperspecial subgroup.

In particular, all references above assume that the group appearing in the Shimura datum is unramified at \(p\).
A key ingredient in all the approaches above is the Satake isomorphism \cite{Satake:Theory, Borel:Automorphic}, relating the unramified Hecke algebra to rings of functions on stacks of Langlands parameters.
As mentioned before, there are versions of the Satake isomorphism for ramified groups, using the inertia-invariants \(\widehat{G}^I\) rather than \(\widehat{G}\).
Thus, it is natural to expect that in the ramified case, the action of Frobenius on the cohomology of a Shimura variety satisfies a similar Hecke polynomial as in the unramified case, but instead defined via a representation of \(\widehat{G}^I\).
This is indeed the case, and the approach from \cite{DvHKZ:Igusa} goes through with minor modifications, as soon as one can relate \(\widehat{G}^I\) to the stack of Langlands parameters constructed in \cite{DatHelmKurinczukMoss:Moduli, Zhu:Coherent, FarguesScholze:Geometrization} (at least for quasi-split groups).
See also \cite[Remark 9.4.9]{DvHKZ:Igusa}, although perhaps surprisingly, we do not need our groups to be tamely ramified.
To handle groups that are not quasi-split, we use transfer morphisms as defined by Haines \cite[§11.12]{Haines:Stable}, and compare the Fargues--Scholze local Langlands correspondence to the construction from \cite{Haines:Satake}.
We then get the following theorem, where \(E\) denotes a \(p\)-adic completion of the reflex field of the Shimura variety.

\begin{thm}
	Let \(\Sh_{K',\overline{E}}\) be a Hodge type Shimura variety with Iwahori level at \(p\), and \(\Lambda\) a \(\IZ_\ell[\sqrt{p}]\)-algebra, under a mild assumption on \(\ell\neq p\).
	Then, the inertia subgroup of \(\Gal(\overline{E}/E)\) acts unipotently on \(\mathrm{H}_{\mathrm{\acute{e}t}}^i(\Sh_{K',\overline{E}},\Lambda)\) for each \(i\), whereas the action of any lift of Frobenius satisfies an explicit Hecke polynomial.
\end{thm}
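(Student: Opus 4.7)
The plan is to adapt the argument of Daniels--van Hoften--Kim--Zhang \cite{DvHKZ:Igusa} to the ramified setting, replacing their stack of unramified Langlands parameters (built from $\widehat{G}$) with the stack of spherical Langlands parameters constructed earlier in the paper (built from $\widehat{G}^I$). I would first treat the case where $G$ is quasi-split at $p$. The Igusa stack construction of \cite{DvHKZ:Igusa} realizes the cohomology of $\Sh_{K',\overline{E}}$ as a module over the excursion algebra, i.e.\ over functions on the Fargues--Scholze stack of $\widehat{G}$-valued Langlands parameters, via the spectral action. The first key step is to show that this module structure factors through the ring of functions on the spherical stack defined earlier, using the comparison morphism from the spherical stack to the Fargues--Scholze stack also supplied by this paper.

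Once this factorization is in place, the explicit Hecke polynomial should appear as the characteristic polynomial of the highest weight representation of $\widehat{G}^I$ attached to the Shimura cocharacter $\mu$, evaluated at a Frobenius-semisimple conjugacy class of spherical parameters. Unipotent monodromy for inertia comes for free: a point of the spherical parameter stack corresponds to a parameter whose restriction to inertia is trivial after passing to $\widehat{G}^I$, and the residual action of inertia is through a monodromy operator which is unipotent, as one expects for Iwahori-level cohomology sitting inside the corresponding Bernstein block.

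For groups that are not quasi-split at $p$, I would use the transfer morphisms of Haines \cite[§11.12]{Haines:Stable} to reduce to the quasi-split inner form. This requires the identification, also carried out earlier in the paper, of the Fargues--Scholze local Langlands correspondence with the spherical Satake parameters of Haines \cite{Haines:Satake}; this identification transports the automorphic-side transfer to pullback along a morphism of spherical parameter stacks on the spectral side.

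The main obstacle I expect is the compatibility between the Fargues--Scholze spectral action and the Haines transfer: one must show that applying the transfer on the Satake side corresponds, on the spectral side, to pullback of functions along the natural morphism between the spherical parameter stacks of the inner form and its quasi-split form. A secondary technical issue is the passage from hyperspecial to Iwahori level, where one must identify the relevant Bernstein block, track the $\sqrt{p}$-twist implicit in the statement, and verify that the spectral action preserves enough structure for the Hecke polynomial to descend to Iwahori-level cohomology.
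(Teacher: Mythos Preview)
Your proposal is correct and follows essentially the same route as the paper: the spectral action from \cite{DvHKZ:Igusa} is combined with the factorization through the spherical parameter stack, unipotence of inertia comes from spherical parameters being trivial on inertia, and the Hecke polynomial for Frobenius is the characteristic polynomial of the highest-weight $\widehat{G}^I$-representation attached to $\mu$, transported via the transfer map.

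One small imprecision worth correcting: you describe the main obstacle as matching the transfer with ``pullback along the natural morphism between the spherical parameter stacks of the inner form and its quasi-split form,'' but these two stacks are canonically the \emph{same} stack, since $\widehat{G}$ and $\widehat{G}^I$ depend only on the quasi-split inner form. The paper accordingly does not split into quasi-split and non-quasi-split cases; instead it proves a single commutative diagram (\thref{Hecke algebras and Bernstein centers}) asserting that the Fargues--Scholze map $\Zz^{\mathrm{spec}}(G,\Lambda)\to \Zz(G(F),\Lambda)\to \Hh_{\Ii}$ factors as $\Zz^{\mathrm{spec}}(G,\Lambda)\to \Hh_{\Gg^*}\xrightarrow{\widetilde{t}} \Hh_{\Gg}\xrightarrow{\Theta_{\mathrm{Bern}}} \Hh_{\Ii}$, with the transfer $\widetilde{t}$ built in from the start. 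Your ``secondary technical issue'' of passing from special to Iwahori level is exactly the Bernstein isomorphism $\Theta_{\mathrm{Bern}}$ appearing in that diagram.
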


We refer to Section \ref{Sec--Congruence}, and more specifically \thref{Eichler-Shimura}, for a precise statement and details.

\subsection{The stack of spherical Langlands parameters}

Let \(G\) be a reductive group over a nonarchimedean local field \(F\) of residue characteristic \(p\).
In \cite{DatHelmKurinczukMoss:Moduli, Zhu:Coherent, FarguesScholze:Geometrization}, the various authors construct the stack of Langlands parameters for \(G\).
This stack is, roughly speaking, obtained by considering a scheme \(\Loc_{\CG}^{\square}\) parametrizing morphisms from a discretization of the Weil group to the Langlands dual group, and then quotienting out a \(\widehat{G}\)-action; we refer to loc.~cit.~for precise definitions.
If \(G\) is tamely ramified, \cite[§3.3]{Zhu:Coherent} also defines substacks of tame and unipotent parameters, which are obtained by taking a subscheme of \(\Loc_{\CG}^{\square}\) and quotienting out the restricted \(\widehat{G}\)-action.
Similarly, if \(G\) is unramified, there is the stack of unramified Langlands parameters.
This consists of those parameters which are trivial on inertia, and is isomorphic to the quotient of \(\widehat{G}\) by a suitable \(\widehat{G}\)-action.
In particular, all the stacks above are clearly (closed) substacks of \(\Loc_{\CG}\).

The stack of unramified Langlands parameters in particular is motivated by the Satake isomorphism for unramified Hecke algebras and the geometric Satake equivalence.
Thus, for ramified groups, it is natural to define the stack of \emph{spherical Langlands parameters} \(\Loc_{\CG}^{\sph}\) as a quotient of \(\widehat{G}^I\) by a suitable action of itself.
Although this makes sense in view of \cite[§6]{Zhu:Ramified} and \cite{Haines:Satake}, modding out by \(\widehat{G}^I\) makes it less clear how to relate it to \(\Loc_{\CG}\), let alone having a (closed) immersion.
Moreover, in the Satake equivalence from \cite{FarguesScholze:Geometrization}, it is also the full Langlands dual group that appears, even for ramified groups; this is exactly what allows Fargues--Scholze to construct their spectral action.
Nevertheless, it is possible to relate \(\Loc_{\CG}^{\sph}\) to \(\Loc_{\CG}\), cf.~\thref{closed immersion}.

\begin{thm}
	There is a natural closed immersion \(\Loc_{\CG}^{\sph}\to \Loc_{\CG}\).
\end{thm}

A key input for the proof, is the result that for a pinning-preserving automorphism \(\alpha\) of \(\widehat{G}\), that the natural map \(\widehat{G}/\widehat{G}^{\alpha} \to \widehat{G}\colon g\mapsto g\alpha(g^{-1})\) is a closed immersion.
This is proven in Appendix \ref{Appendix:Sean} by Sean Cotner, and allows us to show that the map \(\Loc_{\CG}^{\sph}\to \Loc_{\CG}\) is a closed immersion globally over \(\Spec \IZ[\frac{1}{p}]\), rather than only fiberwise.

\subsection{Outline}

We start by defining and studying the stack of spherical Langlands parameters in Section \ref{Sec--Stack}.
In order to handle non-quasi-split groups, we recall the transfer maps on Hecke algebras in Section \ref{Sec--Transfer}, and make a part of the Fargues--Scholze correspondence explicit in Section \ref{Sec--FS}.
We then use these results to upgrade the strategy of \cite[§9]{DvHKZ:Igusa} and prove the Eichler--Shimura relations for Hodge type Shimura varieties in Section \ref{Sec--Congruence}.
Finally, in Appendix \ref{Appendix:Sean} by Sean Cotner, the key technical result \thref{theorem:thibaud} is proven.

\subsection{Acknowledgements}

I thank Tom Haines, Timo Richarz, Pol van Hoften, Torsten Wedhorn, and Mingjia Zhang for useful discussions and comments on earlier drafts.
I also thank the authors of \cite{DvHKZ:Igusa} for sending me a preliminary version of their paper, and Tom Haines for pointing out \cite{Li:Compatibility}.
Finally, I thank Sean Cotner for writing Appendix \ref{Appendix:Sean}, allowing me to fully prove \thref{closed immersion}.

I acknowledge support by the European research council (ERC) under the European Union’s Horizon 2020 research and innovation programme (grant agreement No 101002592), by the Deutsche Forschungsgemeinschaft (DFG, German Research Foundation) TRR 326 \textit{Geometry and Arithmetic of Uniformized Structures}, project number 444845124 and the LOEWE professorship in Algebra, project number LOEWE/4b//519/05/01.002(0004)/87.

\section{The stack of spherical Langlands parameters}\label{Sec--Stack}

Let \(G\) be a reductive group over a nonarchimedean local field \(F\) of residue characteristic \(p\), and let \(\widehat{G}\) be its dual group, defined over \(\Spec \IZ[\frac{1}{p}]\).
Fix a finite extension \(\widetilde{F}/F\) splitting \(G\), and consider the C-group \(\CG:=\widehat{G}\rtimes(\IG_m\times \Gal(\widetilde{F}/F))\xrightarrow{d} \IG_m\times \Gal(\widetilde{F}/F)\) as defined in \cite[§1.1]{Zhu:Integral}.
Denote the ring of integers of \(F\) by \(\Oo\), and its residue cardinality by \(q=p^r\). 
We will also consider the Weil group, inertia, and wild inertia subgroups of \(F\), and write them as \(W_F\supset I:=I_F\supset P_F\). 
Fix a lift \(\sigma\in W_F\) of the arithmetic Frobenius, as well as a (topological) generator of the tame inertia \(\tau\in I_F/P_F\subseteq \Gamma_F^t := \Gal(\overline{F}/F)/P_F\).
This determines an injection
\[\iota \colon \langle\sigma,\tau\mid \sigma\tau\sigma^{-1}=\tau^q \rangle \hookrightarrow \Gamma_F^t.\]
Depending on this choice, Zhu \cite[§3.1]{Zhu:Coherent} defines the stack of Langlands parameters \(\Loc_{\CG} = \Loc_{\CG}^{\square}/\widehat{G}\) over \(\Spec \IZ[\frac{1}{p}]\).
For unramified reductive groups (i.e., which are quasi-split and split over an unramified field extensions), there is the substack of unramified Langlands parameters \(\Loc_{\CG}^{\mathrm{unr}} = \widehat{G}q^{-1}\sigma /\widehat{G}\) \cite[§3.3]{Zhu:Coherent}; here \(\widehat{G}q^{-1}\sigma \subseteq \widehat{G}\rtimes(\IG_m\times \Gal(\widetilde{F}/F)) = \CG\) is a closed subscheme stable for the \(\widehat{G}\)-conjugation action, and the restricted action corresponds to \(q^{-1}\sigma\)-twisted conjugation.
The goal of this section is to define an analogous substack for groups that are not necessarily unramified.
Recall the notion of spherical Langlands parameters from \cite[Definition 6.3]{Zhu:Ramified}, i.e., those parameters which are \(\widehat{G}\)-conjugate to a parameter which is trivial on inertia.
Then \cite[Lemma 6.3]{Zhu:Ramified} suggests the following definition.

\begin{dfn}
	The \emph{stack of spherical Langlands parameters} is defined as the quotient stack \(\Loc_{\CG}^{\sph} := \widehat{G}^I q^{-1}\sigma/\widehat{G}^I\), living over \(\Spec \IZ[\frac{1}{p}]\).
\end{dfn}

The above definition is useful in light of the following description of its global sections.
For a parahoric integral model \(\Gg/\Oo\) of \(G\), let \(\Hh_{\Gg}:=C_c(\Gg(\Oo)\backslash G(F)/\Gg(\Oo_F),\IZ[\frac{1}{p}])\) denote the corresponding Hecke algebra with coefficients in \(\IZ[\frac{1}{p}]\).
Recall the notion of very special parahorics \cite[Definition 6.1]{Zhu:Ramified}, which exist exactly when \(G\) is quasi-split \cite[Lemma 6.1]{Zhu:Ramified}.

\begin{prop}\thlabel{global sections of stack of spherical parameters}
	Assume \(G\) is quasi-split, and let \(\Gg/\Oo\) be a very special parahoric.
	Then, there is a canonical isomorphism
	\[\mathrm{H}^0\Gamma(\Loc_{\CG}^{\sph}, \Oo_{\Loc_{\CG}^{\sph}}) \cong \Hh_{\Gg}.\]
\end{prop}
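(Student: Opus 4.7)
The plan is to unpack the quotient stack into an invariant-theoretic computation, and then match the resulting algebra of invariants with $\Hh_{\Gg}$ via the ramified Satake isomorphism for very special parahorics. The key auxiliary input is that for $G$ quasi-split, the subgroup scheme $\widehat{G}^I \subseteq \widehat{G}$ is a smooth affine (in fact reductive) group scheme over $\IZ[\frac{1}{p}]$, so no subtlety arises in passing to invariants.

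First, since $\widehat{G}^I$ acts on the affine scheme $\widehat{G}^I q^{-1}\sigma$ by conjugation inside the C-group, the global sections of the quotient stack are computed by honest invariants,
\[\mathrm{H}^0\Gamma(\Loc_{\CG}^{\sph},\Oo_{\Loc_{\CG}^{\sph}}) = \mathcal{O}(\widehat{G}^I q^{-1}\sigma)^{\widehat{G}^I}.\]
Translating by $q^{-1}\sigma$ yields an isomorphism $\widehat{G}^I q^{-1}\sigma \xrightarrow{\sim} \widehat{G}^I$, $hq^{-1}\sigma \mapsto h$; since $q^{-1}$ lies in the $\IG_m$-factor and is in particular central in $\CG$, a direct computation shows that under this translation the conjugation action is carried to the $\sigma$-twisted conjugation action $g\cdot h = g\,h\,\sigma(g)^{-1}$ on $\widehat{G}^I$, where $\sigma$ acts on $\widehat{G}^I$ through its image in $\Gal(\widetilde{F}/F)$. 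Hence the invariants above are identified with $\mathcal{O}(\widehat{G}^I)^{\widehat{G}^I,\,\sigma\text{-conj}}$.

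The final step is to invoke the ramified Satake isomorphism for quasi-split groups with very special parahoric level, as in \cite{Zhu:Ramified}, together with its integral refinements over $\IZ[\frac{1}{p}]$ in \cite{HainesRostami:Satake, vdH:RamifiedSatake}. This yields a canonical isomorphism between $\Hh_{\Gg}$ and precisely the algebra of $\sigma$-twisted conjugation invariants on $\widehat{G}^I$. The factor $q^{-1}$ in the definition of $\Loc_{\CG}^{\sph}$ accounts for the $q^{1/2}$-normalization customary in the Satake transform, and it is precisely this twist that allows the isomorphism to be stated integrally over the C-group $\CG$ rather than merely over $\widehat{G}\rtimes\Gal$. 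The main obstacle lies here: the Satake isomorphism in the literature is often phrased in terms of $W_0$-invariant functions on $\widehat{T}^I$ or via a spectral-side representation-theoretic presentation, and one must carefully match these forms with the $\sigma$-twisted conjugation invariants on $\widehat{G}^I$ used here. Once the conventions are aligned, naturality of the Satake isomorphism delivers the canonical identification, and one may read off that the construction is compatible with the definition of the Hecke algebra over $\IZ[\tfrac{1}{p}]$.
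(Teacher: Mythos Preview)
Your proposal is correct and follows essentially the same route as the paper: identify $\mathrm{H}^0\Gamma(\Loc_{\CG}^{\sph},\Oo)$ with the $\widehat{G}^I$-invariant functions on $\widehat{G}^I q^{-1}\sigma$, then invoke the integral ramified Satake isomorphism of \cite[Theorem~10.11]{vdH:RamifiedSatake}. One minor inaccuracy worth flagging: your assertion that $\widehat{G}^I$ is smooth (let alone reductive) over $\IZ[\tfrac{1}{p}]$ need not hold when $G$ is wildly ramified---the paper only uses flatness, via \cite{ALRR:Fixed}---but this plays no role in the argument, since $\mathrm{H}^0$ of the structure sheaf of an affine quotient stack $[X/H]$ is always $\mathcal{O}(X)^H$.
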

Since each reductive group admits a unique (up to isomorphism) quasi-split inner form, and the stack of (spherical) Langlands parameters is invariant under passing to inner forms, we also get a description of \(\mathrm{H}^0\Gamma(\Loc_{\CG}^{\sph}, \Oo_{\Loc_{\CG}^{\sph}})\) for general \(G\).
\begin{proof}
	Since these global sections are given by the \(\widehat{G}^I\)-invariant functions on \(\widehat{G}^Iq^{-1}\sigma\), the proposition follows from the integral Satake isomorphism for \(\Hh_{\Gg}\) \cite[Theorem 10.11]{vdH:RamifiedSatake}, after inverting \(p\).
\end{proof}

In order to relate \(\Loc_{\CG}^{\sph}\) to \(\Loc_{\CG}\), we also need a framed version.
Consider the diagonal action of \(\widehat{G}^I\) on \(\widehat{G}\times \widehat{G}^Iq^{-1}\sigma\), where \(\widehat{G}^I\) acts on \(\widehat{G}\) by right multiplication, and on \(\widehat{G}^{I}q^{-1}\sigma\) by conjugation.

\begin{lem}
	The quotient \((\widehat{G}\times \widehat{G}^Iq^{-1}\sigma)/\widehat{G}^I\) for the above action is representable by a scheme.
\end{lem}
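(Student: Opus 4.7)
The plan is to realise the quotient as an associated fibre bundle over the coset space $\widehat{G}/\widehat{G}^I$, exploiting that the right-multiplication action of $\widehat{G}^I$ on the first factor is free. The projection $\widehat{G}\times\widehat{G}^Iq^{-1}\sigma\to\widehat{G}$ is equivariant for the diagonal action on the source and the right-multiplication action on the target, so post-composing with $\widehat{G}\to\widehat{G}/\widehat{G}^I$ yields a $\widehat{G}^I$-invariant morphism which factors through the quotient. A direct orbit analysis (each diagonal orbit has a unique representative with any prescribed first coordinate) identifies
\[
(\widehat{G}\times\widehat{G}^Iq^{-1}\sigma)/\widehat{G}^I\;\cong\;\widehat{G}\times^{\widehat{G}^I}\bigl(\widehat{G}^Iq^{-1}\sigma\bigr)\;\longrightarrow\;\widehat{G}/\widehat{G}^I
\]
as the bundle associated to the right $\widehat{G}^I$-torsor $\widehat{G}\to\widehat{G}/\widehat{G}^I$ along the conjugation action on $\widehat{G}^Iq^{-1}\sigma$.

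The essential input is therefore that $\widehat{G}^I$ is a closed flat subgroup scheme of $\widehat{G}$ over $\Spec\IZ[\tfrac{1}{p}]$, with affine representable quotient $\widehat{G}/\widehat{G}^I$, such that $\widehat{G}\to\widehat{G}/\widehat{G}^I$ is an fppf $\widehat{G}^I$-torsor. The inertial Galois action on the pinned group $\widehat{G}$ factors through a finite subgroup of $\Aut(\widehat{G})$ whose image in $\mathrm{Out}(\widehat{G})$ has order at most $|\Aut(\text{Dynkin diagram})|\leq 6$. For $p>3$ this order is prime to $p$, and Steinberg's fixed-point theorem together with Matsushima's criterion give that $\widehat{G}^I$ is smooth reductive and that $\widehat{G}/\widehat{G}^I$ is affine. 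For $p\in\{2,3\}$ the required flatness and representability follow from the general theory of quotients of affine group schemes by flat closed subgroups.

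With the torsor structure in place, representability of the associated bundle follows by fpqc descent: the $\widehat{G}^I$-equivariant affine morphism $\widehat{G}\times\widehat{G}^Iq^{-1}\sigma\to\widehat{G}$ descends along the torsor $\widehat{G}\to\widehat{G}/\widehat{G}^I$ to an affine morphism of schemes, whose source is the desired quotient; in particular the total space is an affine scheme over $\widehat{G}/\widehat{G}^I$. The main obstacle is the uniform control of $\widehat{G}^I$ and of its quotient $\widehat{G}/\widehat{G}^I$ over $\Spec\IZ[\tfrac{1}{p}]$, especially at the small primes $p\in\{2,3\}$ where the inertial image in $\mathrm{Out}(\widehat{G})$ may have $p$-part; once this is granted, the rest of the argument is a routine descent manipulation.
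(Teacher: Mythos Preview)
Your overall strategy---reducing to the representability of $\widehat{G}/\widehat{G}^I$ as a scheme, and then exhibiting the quotient as an associated bundle over it---is exactly the paper's approach. The paper simply notes that the map $(\widehat{G}\times\widehat{G}^Iq^{-1}\sigma)/\widehat{G}^I\to\widehat{G}/\widehat{G}^I$ is representable by affine schemes, and then invokes flatness of $\widehat{G}^I$ over $\Spec\IZ[\tfrac1p]$ together with Anantharaman's theorem on quotients over a one-dimensional base to conclude that $\widehat{G}/\widehat{G}^I$ is a scheme.

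The gap in your argument is precisely in the ``essential input'', namely the flatness of $\widehat{G}^I$ over $\Spec\IZ[\tfrac1p]$. Your case analysis on $p$ has the logic reversed: the base $\Spec\IZ[\tfrac1p]$ contains every prime $\ell\neq p$, and Steinberg's theorem yields smoothness of the fixed-point group only in characteristics prime to the order of the automorphism. So if, say, $G$ is a ramified unitary group with $p$ odd, the inertial automorphism of $\widehat{G}$ has order $2$, and at the prime $\ell=2$ (which lies in $\Spec\IZ[\tfrac1p]$) one cannot conclude smoothness or reductivity of $\widehat{G}^I$ from Steinberg; Matsushima's criterion is then unavailable. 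Moreover, the bound of $6$ on the image in $\mathrm{Out}(\widehat{G})$ only holds for absolutely simple groups---for a general reductive $G$ (already for tori) the pinned automorphism group can be arbitrarily large. The paper handles this uniformly by citing the recent flatness result of Achar--Louren\c{c}o--Richarz--Riche for fixed points under pinning-preserving automorphisms over $\IZ[\tfrac1p]$; this is a genuinely nontrivial input, and your sentence ``the required flatness and representability follow from the general theory'' does not substitute for it. Once flatness is in hand, you do not need affineness of $\widehat{G}/\widehat{G}^I$: Anantharaman already gives that it is a scheme, which suffices.
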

\begin{proof}
	The map \((\widehat{G}\times \widehat{G}^Iq^{-1}\sigma)/\widehat{G}^I \to \widehat{G}/\widehat{G}^I\) is clearly representable by (affine) schemes, so it suffices to show that \(\widehat{G}/\widehat{G}^I\) is a scheme.
	Since \(\widehat{G}^I\) is flat over \(\Spec \IZ[\frac{1}{p}]\) by \cite[Theorem 5.1]{ALRR:Fixed}, this follows from \cite[Théorème 4.C]{Anantharaman:Schemas}.
\end{proof}

\begin{dfn}
	The scheme \(\Loc_{\CG}^{\sph,\square}\) of \emph{framed spherical Langlands parameters} is defined as the quotient \((\widehat{G}\times \widehat{G}^Iq^{-1}\sigma)/\widehat{G}^I\).
	It admits a natural \(\widehat{G}\)-action such that \(\Loc_{\CG}^{\sph,\square}/\widehat{G} \cong \Loc_{\CG}^{\sph}\).
\end{dfn}

Let us now relate \(\Loc_{\CG}^\sph\) to the full stack of Langlands parameters \(\Loc_{\CG}\).
Recall from \cite[§3.1]{Zhu:Coherent} that \(\Loc_{\CG}^{\square}\) admits an open and closed (affine) subscheme
\[\Loc_{\CG,F^t\widetilde{F}/F}^{\square}\colon A\mapsto \left\{\rho\colon \Gamma_{F^t\widetilde{F},\iota}\to \CG(A)\mid d\circ \rho = \chi \iota\colon \Gamma_{F^t\widetilde{F},\iota}\to \IG_m\times \Gal(\widetilde{F}/F) \right\}.\]
Here, we used the following notation:
\begin{itemize}
	\item \(F^t/F\) is the maximal tamely ramified extension,
	\item \(\Gamma_{F^t\widetilde{F},\iota}\) is the pullback of \(\Gal(F^t\widetilde{F}/F)\) along \(\iota\).
	\item \(\chi = (q^{-\parallel\cdot \parallel},\operatorname{pr})\colon W_F\to \IG_m\times \Gal(\widetilde{F}/F)\), where \(\parallel\cdot \parallel\) is the inverse cyclotomic character, and
	\item \(A\) is any \(\IZ[\frac{1}{p}]\)-algebra.
\end{itemize}

By \cite[Lemma 6.3]{Zhu:Ramified} and its proof, there is a closed immersion \(\widehat{G}^Iq^{-1}\sigma \subseteq \Loc_{\CG,F^t\widetilde{F}/F}^{\square}\), given by sending \(g\in \widehat{G}^I(A)\) to the parameter which is trivial on the inertia part of \(\Gamma_{F^t\widetilde{F},\iota}\), and sends the image of \(\sigma\) in \(\Gamma_{F^t\widetilde{F},\iota}\) to \(g\).
The action of \(\widehat{G}\) on \(\Loc_{\CG,F^t\widetilde{F}/F}^{\square}\) then determines a map \(\widehat{G} \times \widehat{G}^Iq^{-1}\sigma\to \Loc_{\CG,F^t\widetilde{F}/F}^{\square}\), which clearly descends to a map
\[\Phi^{\square}\colon \Loc_{\CG}^{\sph,\square}\to \Loc_{\CG,F^t\widetilde{F}/F}^{\square}\subseteq \Loc_{\CG}^{\square}.\]
This map is \(\widehat{G}\)-equivariant, and quotienting out this action gives a canonical map
\[\Phi\colon \Loc_{\CG}^{\sph}\to \Loc_{\CG}.\]
Note that if \(G\) is unramified, this agrees with the usual inclusion \(\Loc_{\CG}^{\mathrm{unr}}\subseteq \Loc_{\CG}\).

\begin{thm}\thlabel{closed immersion}
	The map \(\Phi\colon \Loc_{\CG}^{\sph} \to \Loc_{\CG}\) is a closed immersion.
\end{thm}
\begin{proof}
	It suffices to show that \(\Phi^{\square}\) is a closed immersion.
	Consider the Galois group \(\Gal(F^t\widetilde{F}/F^t)\), i.e., the wild part of \(\Gal(F^t\widetilde{F}/F^t)\).
	This is a finite \(p\)-group, and hence can be obtained as an extension of finite cyclic groups.
	Fix such an extension, as well as lifts \(\{\alpha_i\}_{i=1,\ldots,n}\) of generators of these finite cyclic components.
	Then we have a closed subscheme \(X\subseteq \Loc_{\CG,F^t\widetilde{F}/F}^{\square}\), where
	\[X=\left\{(\beta_1,\ldots,\beta_n,\gamma,\delta)\in (\prod_{i=1}^n \widehat{G}\alpha_i) \times \widehat{G}\tau \times \widehat{G}q^{-1}\sigma\mid \ldots \right\} \subseteq (\prod_{i=1}^n \widehat{G}\alpha_i) \times \widehat{G}\tau \times \widehat{G}q^{-1}\sigma,\]
	where the \(\beta_i,\gamma,\delta\) are subjects to the same relations as the \(\alpha_i,\tau,\sigma\) in \(\Gamma_{F^t\widetilde{F},\iota}\).
	
	By \thref{theorem:thibaud}, the \(\widehat{G}\)-orbit \(Y\subseteq (\prod_{i=1}^n \widehat{G} \alpha_i) \times \widehat{G}\tau\) (for the conjugation action) containing \((\alpha_1,\ldots,\alpha_n,\tau)\in (\prod_{i=1}^n \widehat{G}\alpha_i) \times \widehat{G}\tau\) is a closed subscheme.
	The stabilizer is given by those elements in \(\widehat{G}\) that are fixed by all \(\alpha_i\) as well as \(\tau\), and hence is exactly \(\widehat{G}^I\).
	Let \(Z\) be the preimage of \(Y\) in \(X\) under the projection \(X\to (\prod_{i=1}^n \widehat{G}\alpha_i) \times \widehat{G} \tau\).
	Then there is a natural morphism
	\[\widehat{G} \times \widehat{G}^I \to Z \colon (g,h)\mapsto (g\alpha_1(g^{-1}) \cdot \alpha,\ldots,g\alpha_n(g^{-1})\cdot \alpha_n,g\tau(g^{-1})\cdot \tau,gh\sigma(g^{-1})\cdot \sigma).\]
	This morphism is readily seen to be a \(\widehat{G}^I\)-torsor over \(Z\), for the same action defining \(\Loc_{\CG}^{\sph,\square}\).
	This shows that \(Z\cong \Loc_{\CG}^{\sph,\square}\) is a closed subscheme of \(X\), which itself is a closed subscheme of \(\Loc_{\CG}^{\square}\).
	The inclusion moreover agrees with \(\Phi\), as desired.
\end{proof}

For any representation \(V\) of \(\widehat{G}\) on finite locally free \(\IZ[\frac{1}{p}]\)-modules, we get a vector bundle \(\widetilde{V}\) on \(\Loc_{\CG}\), by pullback along \(\Loc_{\CG}\to */\widehat{G}\).
Similarly, any \(\widehat{G}^I\)-representation on finite locally free modules yields a vector bundle on \(\Loc_{\CG}^{\sph}\) by pullback along \(\Loc_{\CG}^{\sph}\to */\widehat{G}^I\).
Since such vector bundles have applications in the Langlands program (via the geometric Satake equivalence), we explain how these two constructions of vector bundles on \(\Loc_{\CG}^{\sph}\) relate.

\begin{prop}\thlabel{comparing vector bundles}
	For any finite locally free \(\widehat{G}\)-representation \(V\), the vector bundle \(\widetilde{V_{\mid \widehat{G}^I}}\) on \(\Loc_{\CG}^{\sph}\) is canonically isomorphic to the restriction of \(\widetilde{V}\) along \(\Loc_{\CG}^{\sph}\to \Loc_{\CG}\).
\end{prop}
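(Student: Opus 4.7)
The plan is to reduce the proposition to the existence of a canonical 2-commutative diagram
\[
\begin{tikzcd}
\Loc_{\CG}^{\sph} \ar[r, "\Phi"] \ar[d] & \Loc_{\CG} \ar[d] \\
{*}/\widehat{G}^I \ar[r] & {*}/\widehat{G},
\end{tikzcd}
\]
where the bottom arrow is induced by the inclusion \(\widehat{G}^I\into\widehat{G}\), the left vertical map classifies the \(\widehat{G}^I\)-torsor \(\widehat{G}^Iq^{-1}\sigma\to \Loc_{\CG}^{\sph}\), and the right vertical map classifies the \(\widehat{G}\)-torsor \(\Loc_{\CG}^{\square}\to \Loc_{\CG}\). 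Granting the diagram, pulling the representation \(V\) on \(*/\widehat{G}\) back along the two compositions \(\Loc_{\CG}^{\sph}\to \Loc_{\CG}\to */\widehat{G}\) and \(\Loc_{\CG}^{\sph}\to */\widehat{G}^I\to */\widehat{G}\) yields \(\Phi^*\widetilde{V}\) and \(\widetilde{V_{\mid\widehat{G}^I}}\) respectively, and the 2-commutativity gives the asserted canonical isomorphism by pure functoriality.

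To verify the 2-commutativity, the key step is to identify the pulled-back \(\widehat{G}\)-torsor \(\Phi^*\Loc_{\CG}^{\square}=\Loc_{\CG}^{\sph}\times_{\Loc_{\CG}}\Loc_{\CG}^{\square}\) on \(\Loc_{\CG}^{\sph}\) with the induction \(\widehat{G}^Iq^{-1}\sigma\times^{\widehat{G}^I}\widehat{G}\) of the \(\widehat{G}^I\)-torsor \(\widehat{G}^Iq^{-1}\sigma\to\Loc_{\CG}^{\sph}\) along \(\widehat{G}^I\into\widehat{G}\). This identification is essentially built into the construction of \(\Loc_{\CG}^{\sph,\square}\): by definition \((\widehat{G}\times \widehat{G}^Iq^{-1}\sigma)/\widehat{G}^I\) is precisely such a contracted product (up to swapping factors), so it represents the induced \(\widehat{G}\)-torsor. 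On the other hand, the \(\widehat{G}\)-equivariant map \(\Phi^{\square}\colon \Loc_{\CG}^{\sph,\square}\to \Loc_{\CG}^{\square}\) lying over \(\Phi\) induces a \(\widehat{G}\)-equivariant morphism of \(\widehat{G}\)-torsors \(\Loc_{\CG}^{\sph,\square}\to \Loc_{\CG}^{\sph}\times_{\Loc_{\CG}}\Loc_{\CG}^{\square}\), which is automatically an isomorphism. Composing both identifications yields the 2-commutativity.

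The main obstacle is bookkeeping the action conventions. In the definition of \(\Loc_{\CG}^{\sph,\square}\) the group \(\widehat{G}^I\) acts on \(\widehat{G}\) by right multiplication and on \(\widehat{G}^Iq^{-1}\sigma\) by conjugation, whereas the standard convention for the contracted product \(P\times^H G\) uses a right \(H\)-action on \(P\) and left-multiplication of \(H\) on \(G\) via the inclusion. Matching these up—so that the canonical isomorphism above is natural and not twisted by some inverse—is routine but needs to be done explicitly; once this is in place, the proposition is immediate from the functoriality of the associated-bundle construction along \(*/\widehat{G}^I\to */\widehat{G}\).
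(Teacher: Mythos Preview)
Your proposal is correct and follows essentially the same approach as the paper: both reduce to the commutativity of the square relating \(\Loc_{\CG}^{\sph}\to */\widehat{G}^I\) and \(\Loc_{\CG}^{\sph}\to */\widehat{G}\), and both verify it by recognizing \(\Loc_{\CG}^{\sph,\square}\) simultaneously as the pullback of \(\Loc_{\CG}^{\square}\) along \(\Phi\) and as the \(\widehat{G}\)-torsor induced from \(\widehat{G}^Iq^{-1}\sigma\to\Loc_{\CG}^{\sph}\). Your version is a bit more explicit about why the pulled-back torsor is \(\Loc_{\CG}^{\sph,\square}\), whereas the paper treats this as immediate from the construction of \(\Phi\) and instead phrases the verification as showing that \([(\widehat{G}\times \widehat{G}^Iq^{-1}\sigma)/\widehat{G}^I]/\widehat{G}\to */\widehat{G}\) factors through \(*/\widehat{G}^I\).
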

\begin{proof}
	The proposition will follow once we show the following diagram commutes:
	\[\begin{tikzcd}
		\widehat{G}^Iq^{-1}\sigma/\widehat{G}^I \arrow[d] \arrow[r, "\cong"] & {[}(\widehat{G}\times \widehat{G}^Iq^{-1}\sigma)/\widehat{G}^I{]}/\widehat{G} \arrow[d]\\
		*/\widehat{G}^I \arrow[r] & */\widehat{G}.
	\end{tikzcd}\]
	Since the upper arrow is an isomorphism, it suffices to show that \([(\widehat{G}\times \widehat{G}^Iq^{-1}\sigma)/\widehat{G}^I]/\widehat{G} \to */\widehat{G}\) factors through \(*/\widehat{G}^I\).
	This follows by considering the maps
	\[(\widehat{G}\times \widehat{G}^Iq^{-1}\sigma)/\widehat{G}^I \to \widehat{G}/\widehat{G}^I \to \Spec \IZ[\frac{1}{p}],\]
	and quotienting out the natural \(\widehat{G}\)-actions.
\end{proof}

\begin{rmk}
	Our definition of \(\Loc_{\CG}^\sph\) is closely related to a construction from \cite{Haines:Satake,Haines:SatakeCorrection}.
	Namely, after base change to \(\mathbb{C}\), the GIT-quotient of \(\Loc_{\CG,\mathbb{C}}^\sph\) is exactly the parameter space defined in \cite[§5]{Haines:Satake}.
	This also agrees with the target of the map \cite[(2.1)]{Haines:SatakeCorrection}, by the two versions of the Satake equivalence, \cite[Theorem 1.0.1]{HainesRostami:Satake} and \thref{global sections of stack of spherical parameters} (cf.~also \cite[Corollary 5.2]{Haines:Satake}).
	The domain of this map can be viewed as a parameter space for the supercuspidal supports of smooth representations of \(G(F)\) with parahoric fixed vectors.
	Thus, this map can be viewed as a part of the local Langlands correspondence, by mapping a smooth representation with parahoric fixed vectors to its associated Langlands parameter (which turns out to be spherical).
	This construction will also appear prominently in the rest of the article: it is defined via a transfer map on Hecke algebras, which we recall and generalize in \thref{defi transfer}, and we compare it to the local Langlands correspondence constructed in \cite{FarguesScholze:Geometrization} in \thref{Hecke algebras and Bernstein centers}.
	Recall from \cite[Lemma 8.2']{Haines:SatakeCorrection} that this map \cite[(2.1)]{Haines:SatakeCorrection} is an isomorphism for quasi-split groups, but in general it is only finite and birational onto its image.
	For inner forms of \(\GL_n\), this map is even a closed immersion \cite[Lemma 2.6]{Haines:SatakeCorrection}.
\end{rmk}

\begin{rmk}
	The relation between \(\Loc_{\CG}^{\sph}\) and the ramified Satake equivalence \cite{Zhu:Ramified,Richarz:Affine,vdH:RamifiedSatake} also has applications to the categorical local Langlands program, as in \cite{Zhu:Coherent}.
	This will be explained in detail in upcoming work (and in fact, this was the author's original motivation for introducing \(\Loc_{\CG}^{\sph}\)).
\end{rmk}

\section{Integral transfer maps}\label{Sec--Transfer}

In this section, we use the integral Satake and Bernstein isomorphisms from \cite{Boumasmoud:Tale} to define (normalized) transfer homomorphisms between Hecke algebras with integral coefficients, generalizing the case of complex coefficients from \cite[§11.12]{Haines:Stable}, and prove some additional properties.
These will be used to handle groups which are not quasi-split.
Let \(\Lambda\) be a coefficient ring in which \(p\) is invertible, and fix a square root \(\sqrt{q}\in \Lambda\).
Throughout this section, all Hecke algebras are assumed to be \(\Lambda\)-linear.

Let \(G/F\) be a reductive group over a nonarchimedean local field as before, and \(G^*/F\) the (unique up to non-unique isomorphism) quasi-split inner form.
More precisely, we fix a \(\Gal(F^s/F)\)-stable \(G^*_{\adj}(F^s)\)-orbit \(\Psi\) of \(F^s\)-rational isomorphisms \(G\dasharrow G^*\), where the dashed arrow implies it is only defined over the separable closure \(F^s\).
Let \(A\subseteq G\) and \(A^*\subseteq G^*\) be maximal \(F\)-split tori, with centralizers \(M\) and \(T^*\) respectively.
These groups are anisotropic mod center, and we denote by \(\Mm\) and \(\Tt^*\) their unique parahoric \(\Oo\)-models.
We also choose parahorics \(\Pp\) and \(\Pp^*\) of \(G\) and \(G^*\) respectively, for which the facets in the Bruhat--Tits buildings are contained in the apartments corresponding to \(A\) and \(A^*\).
Finally, let \(Q\subseteq G\) be a parabolic subgroup having \(M\) as Levi factor, \(B^*\subseteq G^*\) a Borel containing \(T^*\), and \(B^*\subseteq Q^*\subseteq G^*\) the unique parabolic subgroup corresponding to \(Q\) (cf.~\cite[§11.12.1]{Haines:Stable} for a precise statement).

These data induce Galois-equivariant maps \(Z(\widehat{M}) \cong Z(\widehat{M^*}) \into \widehat{T^*}\).
This in turn induces a morphism 
\[t_{A^*,A}\colon \Lambda[X^*(\widehat{T^*})_I^{\sigma}]^{W(G^*,A^*)} \to \Lambda[X^*(Z(\widehat{M}))_I^\sigma]^{W(G,A)},\] 
where \(W(G,A)\) and \(W(G^*,A^*)\) denote the relative Weyl groups.
Similar to \cite[Lemma 4.77]{Haines:Stable}, we can define a normalized version 
\[\widetilde{t}_{A^*,A}\colon \Lambda[X^*(\widehat{T^*})_I^\sigma]^{W(G^*,A^*)} \to \Lambda[X^*(Z(\widehat{M}))_I^\sigma]^{W(G,A)}\]
\[\sum_{t^*} a_{t^*}t^*\mapsto \sum_m\left(\sum_{t^*\mapsto m} a_{t^*} \delta_{B^*}^{-\frac{1}{2}}(t^*) \delta_Q^{\frac{1}{2}}(m)\right)\cdot m.\]
Here, we implicitly used the identifications \(X^*(\widehat{T^*})_I^\sigma\cong T^*(F)/\Tt^*(\Oo)\) and \(X^*(Z(\widehat{M}))_I^\sigma \cong M(F)/\Mm(\Oo)\) from \cite[Proposition 1.0.2]{HainesRostami:Satake}, and \(\delta_{B^*},\delta_Q\) denote the usual modulus functions (cf.~e.g.~\cite[§2]{Haines:Stable}).

Recall the Satake and Bernstein isomorphisms from \cite[Theorem 6.5.1]{Boumasmoud:Tale}.
Using our fixed root \(\sqrt{q}\in \Lambda\), we use the modulus function to renormalize them and get isomorphisms \(Z(\Hh_{\Pp}) \cong \Lambda[X^*(Z(\widehat{M}))_I^\sigma]^{W(G,A)}\) and \(Z(\Hh_{\Pp^*}) \cong \Lambda[X^*(Z(\widehat{T^*}))_I^\sigma]^{W(G^*,A^*)}\).

\begin{dfn}\thlabel{defi transfer}
	The \emph{normalized transfer homomorphism} \(\widetilde{t}\colon Z(\Hh_{\Pp^*}) \to Z(\Hh_{\Pp})\) is the unique map making the following diagram commute:
	\[\begin{tikzcd}
		Z(\Hh_{\Pp^*}) \arrow[d, "\cong"'] \arrow[r, "\widetilde{t}"] & Z(\Hh_{\Pp}) \arrow[d, "\cong"] \\
		\Lambda[X^*(Z(\widehat{T^*}))_I^\sigma]^{W(G^*,A^*)} \arrow[r, "\widetilde{t}_{A^*,A}"] & \Lambda[X^*(Z(\widehat{M}))_I^\sigma]^{W(G,A)}.
	\end{tikzcd}\]
\end{dfn}

As in \cite[§11.12.2]{Haines:Stable}, this map only depends on \(\Pp^*\) and \(\Pp\), as well as the fixed square root of \(q\).

We now show this transfer map is compatible with parabolic induction.
Let \(\Mm^*\) be the schematic closure of \(M^*\) in \(\Pp^*\), which is a parahoric model of \(M^*\).
Recall also the constant term morphisms
\[c_M^G\colon Z(\Hh_{\Pp})\to Z(\Hh_{\Mm})\colon f\mapsto \left(f^{(Q)}\colon l\mapsto \delta_Q^{\frac{1}{2}}(l) \int_{R} f(lr) \mathrm{d}r\right).\]
Here \(R\) is the unipotent radical of \(Q\), although the constant term morphism is independent of the choice of parabolic \(Q\) \cite[§11.11]{Haines:Stable}.

\begin{lem}\thlabel{transfer parabolic induction}
	Let \(\chi\colon  M(F)\to M(F)/\Mm(\Oo)\to \Lambda^\times\) be a weakly unramified character.
	Then the following diagram commutes:
	\[\begin{tikzcd}
		Z(\Hh_{\Pp^*}) \arrow[d, "c_{M^*}^{G^*}"'] \arrow[r, "\widetilde{t}"] & Z(\Hh_{\Pp}) \arrow[d, "c_M^G"'] \arrow[r] & \End\left(\left(\operatorname{Ind}_{Q(F)}^{G(F)}\left(\chi\otimes \delta_Q^{\frac{1}{2}}\right)\right)^{\Pp(\Oo)}\right)\\
		Z(\Hh_{\Mm*}) \arrow[r, "\widetilde{t}"] & Z(\Hh_{\Mm}) \arrow[r] & \End(\chi). \arrow[u]
	\end{tikzcd}\]
\end{lem}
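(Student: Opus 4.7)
The plan is to verify the left and right squares in the diagram independently; the full statement then follows by concatenation.

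For the left square, I would pass to the Satake/Bernstein side via the isomorphisms \(Z(\Hh_{\Pp^*}) \cong \Lambda[X^*(\widehat{T^*})_I^\sigma]^{W(G^*,A^*)}\) and \(Z(\Hh_{\Pp}) \cong \Lambda[X^*(Z(\widehat{M}))_I^\sigma]^{W(G,A)}\) recalled above. Since \(M\) and \(M^*\) are anisotropic modulo their centers, their relative Weyl groups are trivial and the analogous isomorphisms give \(Z(\Hh_{\Mm}) = \Hh_{\Mm} \cong \Lambda[X^*(Z(\widehat{M}))_I^\sigma]\) and \(Z(\Hh_{\Mm^*}) \cong \Lambda[X^*(\widehat{T^*})_I^\sigma]\). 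Under these identifications, the constant term morphisms \(c_M^G\) and \(c_{M^*}^{G^*}\) correspond to the canonical inclusions of Weyl-invariants --- this is a built-in compatibility of the \(\delta_Q^{1/2}\)-normalized Satake transform with (normalized) parabolic induction, already integrally present in \cite[Theorem 6.5.1]{Boumasmoud:Tale}. The top and bottom horizontal arrows in the left square are then given by the \emph{same} explicit formula, with the top one being the restriction of the bottom one to Weyl invariants by the very definition of \(\widetilde{t}_{A^*,A}\); the square commutes tautologically.

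For the right square, I need to show that for \(f \in Z(\Hh_{\Pp})\), its induced endomorphism of \(\left(\operatorname{Ind}_{Q(F)}^{G(F)}(\chi \otimes \delta_Q^{1/2})\right)^{\Pp(\Oo)}\) is scalar multiplication by \(\chi(c_M^G(f)) \in \End(\chi) = \Lambda\). This is the parahoric analogue of the classical Macdonald/Casselman formula giving the Hecke eigenvalue of a spherical vector in an unramified principal series. I would prove it via the Iwasawa decomposition \(G(F) = Q(F) \cdot \Pp(\Oo)\), identifying vectors in \(\left(\operatorname{Ind}\right)^{\Pp(\Oo)}\) with specific \(\chi\)-valued functions on \(\Pp(\Oo)\), then directly computing the convolution: the integral defining \(c_M^G(f)\) arises from integration over the unipotent radical \(R\) of \(Q\), and the two \(\delta_Q^{1/2}\) factors (from the normalized induction and from the definition of \(c_M^G\)) combine with the Jacobian of the change of variable to produce precisely the scalar \(\chi(c_M^G(f))\).

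The main obstacle will be the right square --- specifically the careful bookkeeping of modulus characters and the chosen square root \(\sqrt{q} \in \Lambda\), which is what forces the asymmetric normalization \(\delta_{B^*}^{-1/2}\delta_Q^{1/2}\) appearing in the definition of \(\widetilde{t}_{A^*,A}\). Alternatively, one may deduce the integral statement from its complex analogue in \cite[§11.12]{Haines:Stable} by base change along \(\IZ[\tfrac{1}{p}][\sqrt{q}] \hookrightarrow \mathbb{C}\), since all Hecke algebras and their centers are free \(\Lambda\)-modules and the base change map is faithful, which sidesteps most of the direct calculation.
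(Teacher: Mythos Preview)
Your overall strategy is sound, and in fact the alternative you sketch at the end---reduce to the complex case by torsion-freeness and cite \cite{Haines:Stable}---is precisely what the paper does. The paper's proof is two sentences: the left square holds over $\mathbb{C}$ by \cite[Proposition 4.79]{Haines:Stable} and extends to $\Lambda$ since all algebras involved are torsion-free; the right square is \cite[§11.8]{Haines:Stable}. So your fallback is the paper's main line, and your primary direct argument is a genuinely different (more hands-on) route.

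One slip in your direct argument for the left square: you assert that $M^*$ is anisotropic modulo its center, hence has trivial relative Weyl group, giving $Z(\Hh_{\Mm^*}) \cong \Lambda[X^*(\widehat{T^*})_I^\sigma]$. This is false in general. The Levi $M^*$ of $Q^*$ is the \emph{quasi-split} inner form of $M$, and can have strictly larger split rank---e.g.\ if $G = D^\times$ for a central division algebra of degree $n$, then $M = G$ but $M^* = \GL_n$. So $Z(\Hh_{\Mm^*}) \cong \Lambda[X^*(\widehat{T^*})_I^\sigma]^{W(M^*,A^*)}$ with a nontrivial Weyl group in general. This does not break your approach: the bottom transfer map $\widetilde{t}\colon Z(\Hh_{\Mm^*}) \to Z(\Hh_{\Mm})$ is still defined via $\widetilde{t}_{A^*,A}$ restricted to $W(M^*,A^*)$-invariants, and the left square still commutes because $W(M^*,A^*) \subseteq W(G^*,A^*)$ and the constant term maps correspond to the inclusions of invariants for the larger Weyl groups into those for the smaller ones. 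But the description needs adjusting.

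For the right square, your Iwasawa-decomposition computation is the standard proof and works at any parahoric level (the decomposition $G(F) = Q(F)\Pp(\Oo)$ holds for $Q$ minimal and $\Pp$ arbitrary parahoric). The paper simply cites \cite[§11.8]{Haines:Stable} rather than reproducing it.
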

\begin{proof}
	The compatibility of constant terms and normalized transfer maps can be checked after base change to \(\mathbb{C}\), since all algebras are torsion free, in which case it is shown in \cite[Proposition 4.79]{Haines:Stable}.
	
	On the other hand, the compatibility of constant terms with normalized parabolic induction follows from the discussion in \cite[§11.8]{Haines:Stable}.
\end{proof}

Similarly, we need the transfer maps to be compatible with central isogenies.
Let \(\alpha\colon G'\to G\) be a morphism of reductive \(F\)-groups inducing an isomorphism on adjoint groups, and \(G\dasharrow G^*\) its quasi-split inner form as before, i.e., a \(\Gal(F^s/F)\)-stable \(G_{\mathrm{ad}}^*(F^s)\)-orbit of \(F^s\)-rational isomorphisms.
Fix a representative of this class, and consider the fiber product \((G')^*_{F^s} := G'_{F^s}\times_{G_{F^s}} G^*_{F^s}\).
Then the \(\Gal(F^s/F)\)-action on \(G^*_{F^s}\) induces a \(\Gal(F^s/F)\)-action on \((G')^*_{F^s}\) for which \(\beta_{F^s}\colon (G')^*_{F^s}\to G^*_{F^s}\) is equivariant.
Moreover, we have a \(\Gal(F^s/F)\)-stable \((G'_{\adj})^*(F^s)\)-orbit of isomorphisms \(G'_{F^s}\to (G')^*_{F^s}\).
Thus, \(\beta_{F^s}\) descends to a map \(\beta\colon (G')^*\to G\) which induces an isomorphism on adjoint group, and \((G')^*\) is an inner form of \(G'\) such that the following diagram commutes, where the vertical arrows are suitable choices of representatives:
\[\begin{tikzcd}
	G'_{F^s} \arrow[d, "\cong"'] \arrow[r, "\alpha_{F^s}"] & G_{F^s} \arrow[d, "\cong"]\\
	(G')^*_{F^s} \arrow[r, "\beta_{F^s}"'] & G^*_{F^s}.
\end{tikzcd}\]

Let \(\Pp'\) be the parahoric model of \(G'\) which corresponds to \(\Pp\) under the isomorphism of reduced Bruhat--Tits buildings for \(G\) and \(G'\), and define \((\Pp')^*\) similarly.
Then \(\alpha\) induces a map on Hecke algebras
\[\Hh(\alpha)\colon \Hh_{\Pp'}\to \Hh_{\Pp}\colon f\mapsto \left(\Pp(\Oo)\backslash G(F) / \Pp(\Oo) \ni x\mapsto \sum_{\Pp'(\Oo)\backslash G'(F) / \Pp'(\Oo)\ni y\mapsto x} f(y)\right),\]
and similarly for \(\beta\).

\begin{lem}\thlabel{preservation of centers}
	In the situation above, the map \(\Hh(\alpha)\colon \Hh_{\Pp'}\to \Hh_{\Pp}\) induced by \(\alpha\) is a morphism of \(\Lambda\)-algebras, which preserves centers.
\end{lem}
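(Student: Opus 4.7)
The lemma contains two claims: that $\Hh(\alpha)$ respects convolution, and that it sends $Z(\Hh_{\Pp'})$ into $Z(\Hh_{\Pp})$. My plan is to address them in turn, with most of the work devoted to the second, which is the substantive claim. The broad strategy for the second claim is to reduce, via the constant term maps, to a statement about the abelian algebras $\Hh_{\Mm}$ and $\Hh_{\Mm'}$, in which context preservation of centers is automatic.

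For the algebra morphism property, the key structural input is that $\alpha(\Pp'(\Oo)) \subseteq \Pp(\Oo)$, since the parahorics $\Pp'$ and $\Pp$ correspond under the identification of reduced Bruhat--Tits buildings coming from $\alpha^{\adj}$, which is an isomorphism. Granting this, the identity $\Hh(\alpha)(f*g) = \Hh(\alpha)(f) * \Hh(\alpha)(g)$ follows from a Fubini-type unfolding of definitions, with the fibers of $\Pp'(\Oo)\backslash G'(F)/\Pp'(\Oo) \to \Pp(\Oo)\backslash G(F)/\Pp(\Oo)$ being well-behaved because $\ker(\alpha)$ is central and $G/\alpha(G')$ is a central torus quotient; compatibility of Haar measures is then standard.

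For preservation of centers, I would use the integral Bernstein isomorphism from \cite[Theorem~6.5.1]{Boumasmoud:Tale} in the form that (a suitable renormalization of) $c_M^G$ embeds $Z(\Hh_{\Pp})$ as the $W(G,A)$-invariants $\Lambda[M(F)/\Mm(\Oo)]^{W(G,A)}$ inside the commutative algebra $\Hh_{\Mm}$, and analogously for $\Pp'$. Preservation of centers then reduces to two assertions: (i) the constant-term compatibility $c_M^G \circ \Hh(\alpha) = \Hh(\alpha_M) \circ c_{M'}^{G'}$, where $\alpha_M\colon M'\to M$ is the map on Levi subgroups induced by $\alpha$; and (ii) the map $\Hh(\alpha_M)$ carries $W(G',A')$-invariants to $W(G,A)$-invariants. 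Point (ii) is immediate, since $\alpha^{\adj}$ identifies the two relative Weyl groups and $\alpha_M$ is equivariant for this common action.

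The main obstacle is therefore (i). This amounts to a direct but somewhat fiddly unfolding of the definitions of $c_M^G, c_{M'}^{G'}$, convolution, and the pushforward $\Hh(\alpha)$, essentially parallel to the proof of the analogous compatibility for transfer maps in \cite[§11.8, §11.11--11.12]{Haines:Stable} (as used implicitly in \thref{transfer parabolic induction}). The relevant parabolic of $G'$ is $\alpha^{-1}(Q)$, and the modulus characters $\delta_Q^{1/2}$ and $\delta_{\alpha^{-1}(Q)}^{1/2}$ agree on corresponding elements since $\alpha^{\adj}$ is an isomorphism, so the modulus factors on the two sides cancel in the desired way. Once (i) is in hand, the lemma follows from the preceding paragraph.
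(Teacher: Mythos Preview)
Your treatment of the ring-morphism claim is essentially the paper's: both reduce to the bijection \(\Pp'(\Oo)w'\Pp'(\Oo)/\Pp'(\Oo)\cong \Pp(\Oo)\alpha(w')\Pp(\Oo)/\Pp(\Oo)\) coming from the isomorphism on adjoint groups, which makes the Haar-measure discrepancy cancel.

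For preservation of centers, however, your reduction does not close. The constant term \(c_M^G\) is defined on all of \(\Hh_{\Pp}\), but it is \emph{not} injective there. Already for \(G=\GL_2\) at Iwahori level one checks that \(c_M^G(T_s)=0\) for the finite simple reflection \(s\) (no element of the upper unipotent radical lies in \(\Ii s\Ii\), whose \((2,1)\)-entry is always a unit), while \(T_s\) is not central. Thus, even granting your compatibilities (i) and (ii), the conclusion \(c_M^G(\Hh(\alpha)(z))\in \Lambda[M(F)/\Mm(\Oo)]^{W(G,A)}\) does not force \(\Hh(\alpha)(z)\in Z(\Hh_{\Pp})\): there exist non-central elements with \(W\)-invariant (indeed vanishing) constant term. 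If instead you read ``a suitable renormalization of \(c_M^G\)'' as the inverse of the Bernstein isomorphism, that inverse is only defined on \(Z(\Hh_{\Pp})\), so applying it to \(\Hh(\alpha)(z)\) presupposes exactly what you want to prove.

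The paper avoids this trap by working at Iwahori level with Boumasmoud's Bernstein elements \(\dot{\Theta}_m^\flat\), which span the center. Since \(\alpha\) induces a length-preserving map of Iwahori--Weyl groups, the Iwahori--Matsumoto presentation shows directly that \(\Hh(\alpha)(\dot{\Theta}_m^\flat)=\dot{\Theta}_{\alpha(m)}^\flat\); combined with the identification of Weyl groups this gives the claim at Iwahori level, and the general parahoric case follows by convolving with \(\unit_{\Pp(\Oo)}\). Your idea is salvageable along these lines---tracking explicit generators of the center through \(\Hh(\alpha)\)---but not via the constant term map alone.
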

\begin{proof}
	It is clear that \(\Hh(\alpha)\) is \(\Lambda\)-linear, so let us show it is compatible with the ring structures.
	Let \(\widetilde{W},\widetilde{W}'\) be the Iwahori--Weyl groups of \(G,G'\), and \(W_{\Pp}\cong W_{\Pp'}\) the subgroups corresponding to \(\Pp,\Pp'\); then \(\alpha\) induces a morphism \(\widetilde{W}'\to \widetilde{W}\) preserving these subgroups, which we still denote by \(\alpha\).
	Recall that the convolution of functions \(f,g\in \Hh_{\Pp}\) is given by 
	\[(f\star g)(x) = \int_{G(F)} f(xy^{-1})g(y)\mathrm{d}y = \sum_{w\in W_{\Pp}\backslash \widetilde{W}/W_{\Pp}} \int_{\Pp(\Oo)w\Pp(\Oo)} f(xy^{-1})g(y)\mathrm{d}y,\]
	where \(\mathrm{d}\) is such that \(\Pp(\Oo)\) has measure \(1\).
	By our assumption on \(\alpha\), we have \(\Pp'(\Oo)w'\Pp'(\Oo)/\Pp'(\Oo) \cong \Pp(\Oo)\alpha(w')\Pp(\Oo)/\Pp(\Oo)\), cf.~e.g.~the proof of \cite[Lemma 4.47]{vdH:RamifiedSatake}.
	Thus the difference between \(\Pp'(\Oo)w'\Pp'(\Oo)\) and \(\Pp(\Oo)\alpha(w')\Pp(\Oo)\) is canceled out by the difference between the Haar measures used to define the Hecke algebras, which implies \(\Hh(\alpha)\) is a ring morphism.
	(We note that at Iwahori level, this also follows from the Iwahori--Matsumoto presentation as in \cite[Theorem 3.13.1]{Boumasmoud:Tale}.)
	
	To show that \(\Hh(\alpha)\) preserves centers, we first assume that \(\Pp,\Pp'\) are Iwahori models.
	Then by \cite[Theorem 6.4.1]{Boumasmoud:Tale}, the centers of \(\Hh_{\Pp}\) and \(\Hh_{\Pp'}\) are given by the Weyl group invariants of a subalgebra generated by certain Bernstein elements.
	Since the Weyl groups of \(G\) and \(G'\) agree, it suffices to show \(\Hh(\alpha)\) preserves these Bernstein elements.
	But it follows from the construction in \cite[§4.3]{Boumasmoud:Tale} that the Bernstein element \(\dot{\Theta}_m^\flat\) (in the notation of loc.~cit.) gets sent to \(\dot{\Theta}_{\alpha(m)}^\flat\).
	More precisely, this is similar to \cite[Remark 4.3.2 (i)]{Boumasmoud:Tale}, replacing the use of \cite[Remark 4.1.6]{Boumasmoud:Tale} by the Iwahori--Matsumoto presentation of the Iwahori--Hecke algebra as above.
	
	The result for general parahorics then follows by convolving with \(1_{\Pp(\Oo)}\) and \(1_{\Pp'(\Oo)}\) respectively, as in \cite[Theorem 6.4.1]{Boumasmoud:Tale}, since the quotients of these parahorics by the corresponding Iwahori subgroups yield isomorphic partial flag varieties.
\end{proof}

The desired compatibility can then be stated as follows.

\begin{lem}\thlabel{transfer central isogeny}
	Let \(\alpha\colon G'\to G\) be a morphism of reductive \(F\)-groups inducing an isomorphism on adjoint groups, inducing a morphism \(\beta\colon (G')^*\to G^*\) as above, and \((V,\pi)\) a \(\Lambda\)-linear smooth representation of \(G\) which is generated by its \(\Pp(\Oo)\)-fixed vectors.
	Then the following diagram commutes:
	\[\begin{tikzcd}
		Z(\Hh_{(\Pp')^*}) \arrow[d, "\Hh(\beta)"'] \arrow[r, "\widetilde{t}"] & Z(\Hh_{\Pp'}) \arrow[d, "\Hh(\alpha)"'] \arrow[r] & \End\left(\left(\alpha^*V\right)^{\Pp'(\Oo)}\right)\\
		Z(\Hh_{\Pp^*}) \arrow[r, "\widetilde{t}"'] & Z(\Hh_{\Pp}) \arrow[r] & \End(V^{\Pp(\Oo)}) \arrow[u].
	\end{tikzcd}\]
\end{lem}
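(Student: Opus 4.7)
The plan is to verify the two squares of the diagram separately and then paste them together.

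For the left square (compatibility of the transfer maps $\widetilde{t}$ with the pushforwards $\Hh(\alpha)$ and $\Hh(\beta)$), the strategy is to unravel everything via \thref{defi transfer}. Under the normalized Satake and Bernstein isomorphisms of \cite[Theorem 6.5.1]{Boumasmoud:Tale}, each corner is identified with a Weyl-invariant group algebra of a lattice of the form $X^*(\cdot)_I^\sigma$, and the horizontal arrows become the explicit combinatorial formula for $\widetilde{t}_{A^*,A}$ displayed just before \thref{defi transfer}. Under the Kottwitz identifications $X^*(Z(\widehat{M}))_I^\sigma \cong M(F)/\Mm(\Oo)$ (and the analogous ones for $M'$, $T^*$, $(T')^*$), the vertical arrows $\Hh(\alpha)$ and $\Hh(\beta)$ correspond to the covariant pushforward maps induced by $\alpha$ and $\beta$ on these quotients, by the description of $\Hh(\alpha)$ given in the proof of \thref{preservation of centers}. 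Commutativity then reduces to two ingredients: the dual morphisms $\widehat{G}\to \widehat{G'}$ and $\widehat{G^*}\to \widehat{(G')^*}$ identify the relevant central tori and respect the embeddings $Z(\widehat{M})\hookrightarrow \widehat{T^*}$; and since $\alpha$ induces an isomorphism on adjoint groups, the modulus characters factor through the respective adjoint groups, giving $\alpha^*\delta_Q = \delta_{Q'}$ and $\beta^*\delta_{B^*} = \delta_{(B')^*}$. Together, these imply that the normalized formula for $\widetilde{t}$ intertwines the pushforwards.

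For the right square (compatibility with the action on invariants), we argue directly from definitions. Given $f\in Z(\Hh_{\Pp'})$ and $v\in V^{\Pp(\Oo)}\subseteq (\alpha^*V)^{\Pp'(\Oo)}$, the action $\pi(f)\cdot v$ is given by integrating $\pi(\alpha(x))v$ against $f(x)\,dx$ over $G'(F)$; grouping the integrand by the image of $x$ in $G(F)$, which is precisely the pushforward defining $\Hh(\alpha)$ in \thref{preservation of centers}, this integral equals $\pi(\Hh(\alpha)(f))\cdot v$. Since $\Hh(\alpha)$ sends $Z(\Hh_{\Pp'})$ into $Z(\Hh_{\Pp})$ by \thref{preservation of centers}, the $Z(\Hh_{\Pp'})$-action on $(\alpha^*V)^{\Pp'(\Oo)}$ preserves the subspace $V^{\Pp(\Oo)}$, and its restriction there agrees with the action of $Z(\Hh_\Pp)$ via $\Hh(\alpha)$. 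This is exactly the commutativity of the right square, and simultaneously makes precise the meaning of the right vertical arrow: it sends an endomorphism of $V^{\Pp(\Oo)}$ arising from the $Z(\Hh_\Pp)$-action to the endomorphism of $(\alpha^*V)^{\Pp'(\Oo)}$ arising from the $Z(\Hh_{\Pp'})$-action, using the assumption that $V$ is generated by $V^{\Pp(\Oo)}$ to extend uniquely from the subspace.

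The main obstacle is the bookkeeping in the left square: tracking the Kottwitz and Satake/Bernstein identifications simultaneously for the four groups $G$, $G'$, $G^*$, $(G')^*$ and their Levi subgroups, and verifying that the dual morphisms of $\alpha$ and $\beta$ act compatibly with the normalizing modulus functions on each side. As an alternative to the integral approach, one can reduce to $\Lambda = \mathbb{C}$, since all Hecke algebras in sight are free $\Lambda$-modules with bases given by double cosets, and then import the complex transfer functoriality of \cite[\S11.12]{Haines:Stable}; in that case one would only have to match $\Hh(\beta)$ with the map used there on the level of complex Hecke centers.
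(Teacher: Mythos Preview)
Your proposal is correct and follows essentially the same route as the paper: verify the two squares separately, with the left square falling out of the definition of \(\widetilde{t}\) via the Satake/Bernstein isomorphisms, and the right square from the explicit formula for the Hecke action on fixed vectors. The paper compresses the left square to a single sentence (``immediately from the definition'') and handles the right square by invoking the equivalence between representations generated by \(\Pp(\Oo)\)-fixed vectors and \(\Hh_{\Pp}\)-modules; your write-up simply unpacks both of these, and your observation that \(\alpha^*\delta_Q=\delta_{Q'}\) (because modulus functions factor through the adjoint group) is exactly the content behind the paper's ``immediately.''
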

\begin{proof}
	First off, the diagram is well-defined by \thref{preservation of centers}.
	
	It follows immediately from the definition of the normalized transfer maps that they are compatible with the maps of Hecke algebras, so that the square on the left commutes.
	
	The commutativity of the rightmost square follows from the equivalence between \(\Lambda\)-linear smooth \(G(F)\)-representations \((V,\pi)\) generated by their \(\Pp(\Oo)\)-fixed vectors, and modules under the Hecke algebra \(\Hh_{\Pp}\), which is given by
	\[(V,\pi)\mapsto \left(V^{\Pp(\Oo)},f\mapsto\left(v\mapsto \sum_{g\in G(F)/\Pp(\Oo)} f(g)\pi(g)v\right)\right).\]
\end{proof}

\section{On the Fargues--Scholze correspondence for representations\\ with parahoric fixed vectors}\label{Sec--FS}

In \cite[§IX.4]{FarguesScholze:Geometrization}, Fargues--Scholze attach to each irreducible smooth representation \(\pi\) of \(G(F)\) with \(\overline{\IQ}_\ell\)- (or even \(\overline{\IF}_\ell\)-)coefficients a semisimple Langlands parameter \(\phi_\pi^{\mathrm{FS}}\).
In the current section, we make explicit this construction for representations generated by their parahoric-fixed vectors.
Some of the results below have previously been obtained by Li \cite{Li:Compatibility}, cf.~\thref{remark conjecture Haines}.

Let \(G/F\) be as before, and let \(\Lambda\) be a \(\IZ_\ell[\sqrt{q}]\)-algebra, for some prime \(\ell\neq p\).
We moreover assume that the order of \(\pi_0Z(G)\) is invertible in \(\Lambda\).
Again, all Hecke algebras will be assumed to have coefficients in \(\Lambda\).
In this section and the next, we will use the fixed square root of \(q\) to implicitly identify the stack \(\Loc_{\CG,\Lambda}\), defined using the C-group, with the stacks of Langlands parameters constructed in \cite{DatHelmKurinczukMoss:Moduli,FarguesScholze:Geometrization} (although we will keep the notation \(\Loc_{\CG}\)).
Recall the following notions of Bernstein centers.

\begin{dfn}{\cite[Definition IX.0.2]{FarguesScholze:Geometrization}}
	\begin{enumerate}
		\item The \emph{Bernstein center} of \(G(F)\) is
		\[\Zz(G(F),\Lambda) = \pi_0\End(\identity_{\Dd(G(F),\Lambda)}),\]
		where \(\Dd(G(F),\Lambda)\) is the derived \(\infty\)-category of smooth \(\Lambda\)-linear representations of \(G(F)\).
		\item The \emph{spectral Bernstein center} of \(G\) is \[\Zz^{\mathrm{spec}}(G,\Lambda) = \mathrm{H}^0\Gamma(\Loc_{\CG}, \Oo_{\Loc_{\CG}}) =  \Oo(\Loc_{\CG,\Lambda}^{\square})^{\widehat{G}}.\]
	\end{enumerate}
\end{dfn}

Then under our assumption on \(\Lambda\), Fargues--Scholze \cite[Corollary IX.0.3]{FarguesScholze:Geometrization} have constructed a map
\[\Psi_G\colon \Zz^{\mathrm{spec}}(G,\Lambda)\to \Zz(G(F),\Lambda);\]
in case \(\Lambda=\overline{\IQ}_\ell,\overline{\IF}_\ell\), this induces a correspondence from irreducible smooth \(G(F)\)-representations with to semisimple Langlands parameters.
For tori, this correspondence is known to agree with local class field theory, and is hence well-understood.
Since we will also consider groups which are not quasi-split, whose minimal Levi's are not tori, we start with the following lemma, studying the correspondence for groups which are anisotropic modulo their center.

\begin{lem}\thlabel{FS for anisotropic groups}
	Let \(M/F\) be a reductive group which is anisotropic mod center, \(\Mm/\Oo\) its unique parahoric subgroup, and \(\chi\colon M(F)\to M(F)/\Mm(\Oo)\to \overline{\IQ}_\ell^\times\) a weakly unramified character.
	Then the Fargues--Scholze parameter \(\phi_\chi^{\mathrm{FS}}\) associated with \(\chi\) is spherical.
\end{lem}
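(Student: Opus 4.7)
The plan is to identify \(\phi_\chi^{\mathrm{FS}}\) as a semisimple Langlands parameter lying in the image of the analogue of \(\Phi\) from \thref{closed immersion} for the group \(M\), by analyzing how the Fargues--Scholze spectral Bernstein center acts on \(\chi\).

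First I would observe that, since \(M\) is anisotropic modulo center, \(M(F)\) admits a unique parahoric subgroup \(\Mm(\Oo)\), and \(\chi\) is \(\Mm(\Oo)\)-fixed. The action of the Bernstein center \(\Zz(M(F),\overline{\IQ}_\ell)\) on \(\chi\) therefore factors through the central Hecke algebra \(Z(\Hh_\Mm)\otimes \overline{\IQ}_\ell\), and composing with \(\Psi_M\) yields a character
\[
\lambda_\chi\colon \Zz^{\mathrm{spec}}(M,\overline{\IQ}_\ell) \cong \Oo(\Loc_{{}^c M})^{\widehat{M}} \longrightarrow Z(\Hh_\Mm)\otimes \overline{\IQ}_\ell \longrightarrow \overline{\IQ}_\ell,
\]
which by the construction of the Fargues--Scholze correspondence encodes \(\phi_\chi^{\mathrm{FS}}\). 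Next, applying \thref{global sections of stack of spherical parameters} to the quasi-split inner form \(M^*\) and combining with the transfer map \(\widetilde{t}\) of \thref{defi transfer}, one identifies the image of \(Z(\Hh_\Mm)\otimes \overline{\IQ}_\ell\) with a subring of \(\Oo(\Loc_{{}^c M}^{\sph})^{\widehat{M}^I}\).

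The key step is then to verify that the composition above factors through the pullback map \(\Oo(\Loc_{{}^c M})^{\widehat{M}} \to \Oo(\Loc_{{}^c M}^{\sph})^{\widehat{M}^I}\) induced by \(\Phi\). For tori this is precisely the compatibility of the Fargues--Scholze correspondence with local class field theory, established in \cite{FarguesScholze:Geometrization}. For general \(M\) anisotropic modulo center, the derived subgroup \(M_{\der}\) is semisimple anisotropic, hence \(M_{\der}(F)\) is compact, and \(\chi\) is determined up to a finite-order twist by its restriction to the connected central torus \(Z(M)^0\). One can then reduce the compatibility to the toral case via the central isogeny \(Z(M)^0 \times M_{\der} \to M\) and the functoriality of the Fargues--Scholze spectral action under it.

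Granting this factorization, \(\phi_\chi^{\mathrm{FS}}\) lies in the image of \(\Loc_{{}^c M}^{\sph}(\overline{\IQ}_\ell) \to \Loc_{{}^c M}(\overline{\IQ}_\ell)\) and is therefore spherical in the sense of \cite[Definition 6.3]{Zhu:Ramified}. The principal obstacle will be this compatibility step: matching the geometric Fargues--Scholze construction with the classical Satake--Bernstein description of \(Z(\Hh_\Mm)\) for non-toral \(M\), where one cannot invoke local class field theory directly but must propagate the toral case through the abelianization, using the compactness of \(M_{\der}(F)\).
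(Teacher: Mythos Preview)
Your reduction step contains a genuine gap. You propose to reduce to the toral case via the central isogeny \(Z(M)^0\times M_{\der}\to M\), but this does not eliminate the anisotropic semisimple factor: you still need to know that the Fargues--Scholze parameter attached to the restriction \(\chi|_{M_{\der}(F)}\) (or even just to the trivial character of \(M_{\der}(F)\)) is spherical. Compactness of \(M_{\der}(F)\) alone does not give this; it only tells you the restriction has finite order. The sphericity of the Fargues--Scholze parameter of the trivial representation of an anisotropic semisimple group is precisely the hard content of the lemma, and the paper obtains it by classifying such groups (reducing to \(\mathrm{PGL}_{1,D}\), then \(\mathrm{GL}_{1,D}\)) and invoking the compatibility of the Fargues--Scholze correspondence with local Jacquet--Langlands transfer, so that one may compute via the Steinberg representation of \(\mathrm{GL}_d(F)\). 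Your proposal omits this input entirely.

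There is also a directionality problem with your isogeny. For \(\alpha\colon Z(M)^0\times M_{\der}\to M\), the dual map \(\widehat{M}\to \widehat{Z(M)^0}\times \widehat{M_{\der}}\) is in general not a closed immersion (its kernel is a finite central subgroup), so even if you knew sphericity for the product, it would not immediately descend to \(M\). The paper circumvents this by instead passing to a \(z\)-extension \(\widetilde{M}\to M\) with simply connected derived subgroup and toral kernel, for which the dual map \emph{is} a closed immersion (this is \thref{closed immersion on dual groups}); once \(M_{\der}\) is simply connected, the weakly unramified characters of \(M(F)\) and of its cocenter \(D(F)\) coincide, and one can twist down to a single character before carrying out the semisimple reduction above.
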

\begin{proof}
	We need to show that \(\phi_\chi^{\mathrm{FS}}\) has a representative which is trivial on inertia.
	Let \(\widetilde{M}\to M\) be the canonical central extension with simply connected derived subgroup whose kernel is an induced torus, as in \cite[§5.3.1]{FHLR:Singularities} (note that the construction in loc.~cit.~works for general fields).
	Since this induces a closed immersion on dual groups by \thref{closed immersion on dual groups}, the compatibility of the Fargues--Scholze construction with central isogenies, \cite[Theorem IX.6.1]{FarguesScholze:Geometrization}, reduces us to consider \(\widetilde{M}\), so that we may assume \(M\) has simply connected derived subgroup.
	
	Let \(D=M/M_{\der}\) be the maximal torus quotient of \(M\).
	Then we have an exact sequence 
	\[1\to M_{\der} \to M \to D\to 1,\]
	and hence an exact sequence
	\[1\to \pi_1(M_{\der})\to \pi_1(M)\to \pi_1(D)\to 1.\]
	Since \(M_{\der}\) is simply connected, taking Frobenius-invariants gives an isomorphism \(\pi_1(M)^\sigma\cong \pi_1(D)^\sigma\).
	Thus, the weakly unramified characters of \(M(F)\) and \(D(F)\) are in bijection by \cite[Proposition 1.0.2]{HainesRostami:Satake}.
	The compatibility of the Fargues--Scholze correspondence with twisting and local class field theory in the case of tori, \cite[Theorem I.9.6 (i)-(ii)]{FarguesScholze:Geometrization}, then reduces us to showing the proposition for a single weakly unramified character of \(M\).
	By the compatibility with central isogenies, we may thus assume \(M\) is adjoint.
	By the compatibility with products and Weil restrictions of scalars \cite[Propositions IX.6.2 and IX.6.3]{FarguesScholze:Geometrization}, we may further assume \(M\) is absolutely simple.
	
	Now, any absolutely simple anisotropic adjoint group is of the form \(\PGL_{1,D}\) for some division algebra \(D\) with center \(F\) \cite[4.6]{BruhatTits:Groupes3}.
	Then considering the central isogeny \(\GL_{1,D}\to \PGL_{1,D}\), which induces an injection on dual groups, \cite[Theorem IX.6.1]{FarguesScholze:Geometrization} allows us to consider the case \(M=\GL_{1,D}\).
	Then it is known that the semisimple Langlands parameter associated with a smooth irreducible representation of \(D^\times\) agrees with the parameter associated with its local Jacquet--Langlands transfer as a representation of \(\GL_d(F)\), where \(\dim_F D = d^2\): see \cite[Theorem 6.6.1]{HansenKalethaWeinstein:Kottwitz} for the mixed characteristic case, and \cite[Theorem D]{LiHuerta:LocalGlobal} for the equicharacteristic case.
	By the above paragraph, it suffices to consider a single weakly unramified character of \(D^\times\), so we may choose the trivial one, whose local Jacquet--Langlands transfer is the Steinberg representation of \(\GL_d(F)\).
	The compatibility with parabolic induction then shows that the associated Langlands parameter is spherical.
\end{proof}

We have used the following lemma.

\begin{lem}\thlabel{closed immersion on dual groups}
	Let \(\widetilde{G}\to G\) be a central extension of reductive \(F\)-groups, whose kernel is a torus.
	Then this induces a canonical map of dual groups, which is a closed immersion.
\end{lem}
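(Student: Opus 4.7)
The strategy is to build the map of dual groups from an explicit morphism of root data and then identify its image as a closed subgroup of \(\widehat{\widetilde{G}}\).

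First, I would pick a maximal \(F\)-torus \(\widetilde{T}\subset \widetilde{G}\) projecting to a maximal torus \(T\subset G\), so that \(1\to S\to \widetilde{T}\to T\to 1\) is exact. Dualizing the exact sequence of cocharacter lattices \(0\to X_*(S)\to X_*(\widetilde{T})\to X_*(T)\to 0\) gives a closed immersion of dual tori \(\widehat{T}\hookrightarrow \widehat{\widetilde{T}}\). Centrality of \(S\) forces every root of \(\widetilde{G}\) to vanish on \(S\), hence to lie in \(X^*(T)\subseteq X^*(\widetilde{T})\), where it agrees with a root of \(G\); dually, the surjection \(X_*(\widetilde{T})\twoheadrightarrow X_*(T)\) sends the coroots of \(\widetilde{G}\) bijectively to those of \(G\). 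Swapping roots and coroots to pass to the dual side then produces a morphism of based root data from \(\widehat{G}\) to \(\widehat{\widetilde{G}}\) extending the torus immersion.

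Second, by the isomorphism theorem for pinned split reductive group schemes over \(\Spec\IZ[\frac{1}{p}]\), this morphism of root data lifts to a canonical morphism \(\widehat{G}\to \widehat{\widetilde{G}}\). Functoriality in \((G,T)\) implies compatibility with the Galois actions used to define the dual groups, giving the desired canonical map.

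Finally, to verify the closed immersion property, I would identify the image with \(\widehat{T}\cdot \widehat{\widetilde{G}}_{\mathrm{der}}\). The derived subgroups of \(\widehat{G}\) and \(\widehat{\widetilde{G}}\) are canonically isomorphic --- their character lattices are the quotients of \(X_*(T)\) and \(X_*(\widetilde{T})\) by the submodule annihilating all roots, and \(X_*(S)\subseteq X_*(\widetilde{T})\) lies in that annihilator by centrality of \(S\), so the two quotients coincide --- and the constructed morphism sends one isomorphically onto the other. The image is therefore the product of the closed subtorus \(\widehat{T}\) with the closed subgroup scheme \(\widehat{\widetilde{G}}_{\mathrm{der}}\), hence closed. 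Injectivity on the torus part (closed immersion of dual tori) and on the derived part (isomorphism of derived subgroups) shows the scheme-theoretic kernel is trivial, and a monomorphism of smooth affine group schemes over \(\Spec\IZ[\frac{1}{p}]\) with trivial kernel is a closed immersion. The main point requiring care is the canonical identification of derived subgroups on the dual side, which reduces to the short lattice computation above.
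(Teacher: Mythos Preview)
Your approach is sound but takes a completely different route from the paper. The paper argues via the geometric Satake equivalence: the surjection \(\widetilde{G}\to G\) induces a map \(\Gr_{\widetilde{G}}\to\Gr_G\) of affine Grassmannians (in the sense of \cite{FarguesScholze:Geometrization}) which is surjective on connected components and an isomorphism of each component onto its image; pushforward along this map preserves perverse sheaves and is compatible with convolution, so by the Tannakian formalism the resulting tensor functor corresponds to a homomorphism \(\widehat{G}\to\widehat{\widetilde{G}}\), and the surjectivity on Grassmannians translates into every \(\widehat{G}\)-representation being a subquotient of a restricted one, i.e.\ the closed-immersion property. Your root-datum computation is more elementary and entirely self-contained, avoiding the Satake machinery; the paper's argument, by contrast, stays within the Fargues--Scholze framework already in play and sidesteps any explicit lattice bookkeeping.

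Two places in your write-up need tightening. First, ``injectivity on the torus part and on the derived part'' does not by itself force the kernel to be trivial: you should add that since the map is an isomorphism on derived subgroups, the kernel \(K\) meets \(\widehat{G}_{\mathrm{der}}\) trivially, whence \([K,\widehat{G}]\subseteq K\cap\widehat{G}_{\mathrm{der}}=1\), so \(K\) is central and hence contained in \(\widehat{T}\), where injectivity is already known. Second, the blanket claim that a monomorphism of smooth affine group schemes over \(\Spec\IZ[\frac{1}{p}]\) is automatically a closed immersion requires justification beyond what you give; a cleaner way to finish, using what you have already set up, is to observe that \(\widehat{G}\to \widehat{T}\cdot\widehat{\widetilde{G}}_{\mathrm{der}}\) is a homomorphism of flat finitely presented group schemes which is an isomorphism on every geometric fibre, hence an isomorphism by the fibrewise criterion.
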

\begin{proof}
	Any morphism of reductive groups which induces an isomorphism on adjoint groups, yields a (contravariant) morphism of dual groups.
	So it suffices to show this map is a closed immersion.
	Consider the affine Grassmannian \(\Gr_G\) of \(G\) from \cite[§III.3]{FarguesScholze:Geometrization}.
	Then the induced map \(\Gr_{\widetilde{G}}\to \Gr_G\) is surjective by \cite[Lemma III.3.5]{FarguesScholze:Geometrization}.
	More precisely, this map induces an isomorphism of each connected component of \(\Gr_{\widetilde{G}}\) onto its image, and the induced map on connected components is surjective.
	The lemma then follows from the Satake equivalence \cite[§VI]{FarguesScholze:Geometrization} and the Tannakian formalism, since pushforward along \(\Gr_{\widetilde{G}}\to \Gr_G\) preserves equivariant perverse sheaves and is compatible with convolution.
\end{proof}

Now, let \(\Ii/\Oo\) be an Iwahori-model of \(G\) corresponding to an alcove \(\aa_0\) in the Bruhat--Tits building of \(G\), and \(\Gg/\Oo\) a special parahoric model such that the corresponding special vertex lies in the closure of \(\aa_0\).
We also fix a quasi-split inner form \(G^*\) of \(G\), together with a very special parahoric integral model \(\Gg^*\), and corresponding (\(\Lambda\)-linear) Hecke algebra \(\Hh_{\Gg^*}\).
Since the stack of Langlands parameters is invariant under passing to inner forms, there is an obvious map \(\Zz^{\mathrm{spec}}(G,\Lambda)\to \Hh_{\Gg^*}\), obtained by Propositions \ref{global sections of stack of spherical parameters} and \ref{closed immersion}.
The interpretation of the Iwahori--Hecke algebra \(\Hh_{\Ii}\) as \(\End_{G(F)}(\cInd_{\Ii(\Oo)}^{G(F)} \Lambda)\) also yields a map \(\Zz(G(F),\Lambda)\to \Hh_{\Ii}\).
Finally, recall the Bernstein map
\[\Theta_{\mathrm{Bern}}\colon \Hh_{\Gg}\to \Hh_{\Ii}\]
from  \cite[Theorem 6.4.1]{Boumasmoud:Tale}, which is injective with image the center \(Z(\Hh_{\Ii})\) of \(\Hh_{\Ii}\).
The following proposition makes explicit the Fargues--Scholze correspondence for irreducible representations with Iwahori (and hence parahoric) fixed vectors.

\begin{prop}\thlabel{Hecke algebras and Bernstein centers}
	The following diagram commutes:
	\[\begin{tikzcd}
		\Zz^{\mathrm{spec}}(G,\Lambda) \arrow[r, "\Psi_G"] \arrow[d] & \Zz(G(F),\Lambda) \arrow[dd] \\
		\Hh_{\Gg^*} \arrow[d, "\widetilde{t}"']&\\
		\Hh_{\Gg} \arrow[r, "\Theta_{\mathrm{Bern}}"'] & \Hh_{\Ii}.
	\end{tikzcd}\]
\end{prop}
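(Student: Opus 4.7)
Both compositions land in $Z(\Hh_{\Ii})$: for the left-bottom path because the Bernstein map has image exactly $Z(\Hh_{\Ii})$, and for the top-right path because $\Psi_G(z)$ is a natural endotransformation of the identity on $\Dd(G(F),\Lambda)$ and hence commutes with every element of $\Hh_{\Ii}=\End_{G(F)}(\cInd_{\Ii(\Oo)}^{G(F)}\Lambda)$. Since $\Hh_{\Ii}$-modules are equivalent to smooth $G(F)$-representations generated by their Iwahori-fixed vectors and $Z(\Hh_{\Ii})$ is a finitely generated reduced $\Lambda$-algebra, equality of the two central elements can be tested after base change to $\overline{\IQ}_\ell$ by the scalar through which each acts on $\pi^{\Ii(\Oo)}$, as $\pi$ ranges over absolutely irreducible smooth representations with Iwahori-fixed vectors.

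Every such $\pi$ is a subquotient of a normalized parabolic induction $\operatorname{Ind}_{Q(F)}^{G(F)}(\chi\otimes \delta_Q^{1/2})$, where $Q\subseteq G$ has Levi $M$ anisotropic modulo center and $\chi\colon M(F)\to \overline{\IQ}_\ell^\times$ is a weakly unramified character. The top-right path is compatible with parabolic induction by the corresponding property of the Fargues--Scholze correspondence \cite[§IX.7]{FarguesScholze:Geometrization}, and the bottom-left path admits an analogous compatibility: on the spectral side via the map $\Loc_{{}^cM}^{\sph}\to \Loc_{\CG}^{\sph}$ induced by $Z(\widehat{M})^I\subseteq \widehat{G}^I$, on the Hecke-algebra side via \thref{transfer parabolic induction} applied both to $G/G^*$ and to $M/M^*$, and on the Iwahori side via the Bernstein presentation relating scalar actions on $\pi^{\Ii(\Oo)}$ for $\pi\subseteq \operatorname{Ind}_{Q(F)}^{G(F)}(\chi\otimes\delta_Q^{1/2})$ to scalar actions of $\Hh_{\Mm}$ on $\chi^{\Mm(\Oo)}$. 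This reduces the proposition to the case $G=M$ and $\pi=\chi$.

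Assume then that $G$ is anisotropic modulo center. By \thref{FS for anisotropic groups}, $\phi_\chi^{\mathrm{FS}}$ is spherical, so the top-right scalar is the value of $z\in \Oo(\Loc_{\CG})^{\widehat{G}}$ at the image of $\phi_\chi^{\mathrm{FS}}$ in $\Loc_{\CG}^{\sph}$. By \thref{global sections of stack of spherical parameters} and \thref{closed immersion}, this matches the scalar obtained by the bottom-left path, provided that the spherical parameter of $\chi$ produced by the transfer map into $\Hh_{\Mm}$ is identified with the one produced by the Fargues--Scholze correspondence. Reducing along central isogenies as in the proof of \thref{FS for anisotropic groups}---using \thref{transfer central isogeny} and \thref{closed immersion on dual groups}---brings us to the case where $G$ is a torus, where both constructions are given by local class field theory and agree by \cite[Theorem I.9.6(i)--(ii)]{FarguesScholze:Geometrization}.

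The main obstacle is the second step, namely establishing the parabolic-induction compatibility of the bottom-left path in a usable form: while \thref{transfer parabolic induction} compares the transfer maps with constant terms directly, one must still verify that the Bernstein map $\Theta_{\mathrm{Bern}}\colon \Hh_{\Gg}\to Z(\Hh_{\Ii})$ intertwines the constant-term action of $\Hh_{\Gg}$ on $\chi^{\Mm(\Oo)}$ with the $Z(\Hh_{\Ii})$-action on $\pi^{\Ii(\Oo)}$, keeping all $\delta_Q^{1/2}$ normalizations consistent with those built into \thref{defi transfer} and with the normalized parabolic induction used on the Fargues--Scholze side. Once this bookkeeping is settled the remainder of the argument is formal.
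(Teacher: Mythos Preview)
Your overall strategy matches the paper's: reduce to $\overline{\IQ}_\ell$-coefficients, test on irreducible representations with Iwahori-fixed vectors, and use compatibility with parabolic induction to pass to a minimal Levi $M$ that is anisotropic modulo center. The paper also organizes the argument slightly differently, first showing that the top-right composite factors through $\Hh_{\Gg^*}$ (using surjectivity of $\Zz^{\mathrm{spec}}(G,\overline{\IQ}_\ell)\to\Hh_{\Gg^*}\otimes\overline{\IQ}_\ell$, which follows from \thref{closed immersion}), and only then identifying the factorization with $\widetilde{t}$; this sidesteps the bookkeeping you flag in your last paragraph.

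There is, however, a genuine gap in your third paragraph. You claim that ``reducing along central isogenies as in the proof of \thref{FS for anisotropic groups} \ldots\ brings us to the case where $G$ is a torus.'' This is not what that proof does, and in fact it is impossible: an anisotropic-mod-center group such as $D^\times$ for a central division algebra $D/F$ has adjoint group $\PGL_{1,D}$, which is absolutely simple, so no chain of central isogenies can land you at a torus. In \thref{FS for anisotropic groups} the torus quotient $M/M_{\der}$ is used only to show that weakly unramified characters of $M$ are controlled by twisting, reducing to a \emph{single} character of $M$; the group $M$ itself is not replaced by a torus. The remaining (and essential) step is to handle that single character, and for this the paper reduces via products, Weil restriction, and central isogenies to $\GL_{1,D}$, then invokes the compatibility of the Fargues--Scholze correspondence with the local Jacquet--Langlands transfer (\cite[Theorem~6.6.1]{HansenKalethaWeinstein:Kottwitz}, \cite[Theorem~D]{LiHuerta:LocalGlobal}) and with the classical correspondence for $\GL_d$ \cite[Theorem~IX.7.4]{FarguesScholze:Geometrization}, together with the analogous compatibility on the Haines side \cite[\S13.2]{Haines:Satake}. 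Without this input you have no way to compare the two parameters attached to a weakly unramified character of $D^\times$, and the argument does not close.
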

\begin{proof}
	First, note that the rightmost map takes values in the center of \(\Hh_{\Ii}\), which we identify with \(\Hh_{\Gg}\) via \(\Theta_{\mathrm{Bern}}\).
	By the integral Satake isomorphism as in \cite[Theorem 5.2.1]{Boumasmoud:Tale}, \(\Hh_{\Gg}\) is a subring of an integral domain, and hence itself a domain.
	Moreover, by \cite[Theorem VIII.1.3]{FarguesScholze:Geometrization}, \(\Zz^{\mathrm{spec}}(G,\Lambda)\) is torsionfree. 
	Hence, to show this diagram commutes we may work with \(\overline{\IQ}_\ell\)-coefficients.
	
	Note that \(\Zz^{\mathrm{spec}}(G,\overline{\IQ}_\ell) \to \Hh_{\Gg^*}\otimes_{\Lambda} \overline{\IQ}_\ell\) is surjective.
	Indeed, this follows since \(\Loc_{\CG,\overline{\IQ}_\ell}^{\sph}\to \Loc_{\CG,\overline{\IQ}_\ell}\) is a closed immersion by \thref{closed immersion}, and taking \(\widehat{G}\)-invariants is exact in characteristic 0.
	We claim that the composite \[\Zz^{\mathrm{spec}}(G,\overline{\IQ}_\ell) \to \Zz(G(F),\overline{\IQ}_\ell) \to \Hh_{\Gg}\otimes_{\Lambda} \overline{\IQ}_\ell\] factors through \(\Zz^{\mathrm{spec}}(G,\overline{\IQ}_\ell) \to \Hh_{\Gg^*}\otimes_{\Lambda} \overline{\IQ}_\ell\).
	Since these rings are reduced, the statement for the corresponding affine schemes can be checked on \(\overline{\IQ}_\ell\)-valued points, so we need to show that the Fargues--Scholze parameter attached to a smooth irreducible \(G(F)\)-representation with parahoric fixed vectors is spherical.
	Let \(M\) be the centralizer of a maximal split torus of \(G\), which is a minimal Levi.
	Then any \(G(F)\)-representation which is generated by its Iwahori-fixed vectors is a subquotient of the (normalized) parabolic induction of an unramified character of \(M\) \cite[Proposition 2.6]{Casselman:UnramifiedI}. 
	Thus, the compatibility of \(\Psi_G\) with parabolic induction \cite[Corollary IX.7.3]{FarguesScholze:Geometrization} reduces us to the case \(G=M\), where \(M\) is anisotropic mod center.
	(Although \cite[Corollary IX.7.3]{FarguesScholze:Geometrization} is concerned with unnormalized parabolic inductions, it also considers the unnormalized constant term morphisms. 
	Normalizing both still yields a commutative diagram as in loc.~cit.)
	Then the claim follows from \thref{FS for anisotropic groups}.
	
	It remains to show that this factorization is given by the transfer map \(\widetilde{t}\otimes_{\Lambda} \overline{\IQ}_\ell\).
	Again, this can be done on \(\overline{\IQ}_\ell\)-points, and we need to show the two ways of attaching Langlands parameters to irreducible representations with parahoric fixed vectors, as in \cite{FarguesScholze:Geometrization} and \cite{Haines:Satake} agree.
	Note that both constructions are compatible with central isogenies, parabolic induction, products, Weil restrictions of scalars, and twisting.
	Indeed, for the Fargues--Scholze construction, this is \cite[Theorem I.9.6]{FarguesScholze:Geometrization}.
	For the construction involving the transfer maps, the compatibility with parabolic induction and central isogenies is shown in Lemmas \ref{transfer parabolic induction} and \ref{transfer central isogeny}, whereas the compatibility with products is straightforward.
	Note that Hecke algebras and the spectral Bernstein center are invariant under Weil restriction of scalars (cf.~\cite[Lemma 3.1.15]{Zhu:Coherent} for the latter), and it is straightforward to check that the left vertical arrows in the diagram above are preserved under these canonical identifications (this is done in detail in \cite[Lemma 5.1]{Li:Compatibility}).
	This shows the compatibility with Weil restriction of scalars.
	Along with the compatibility with class field theory in the case of tori (\cite[§13.1]{Haines:Satake}), one obtains the compatibility with twisting from a classical argument (e.g.~\cite[End of §IX.6]{FarguesScholze:Geometrization}).
	As in the proof of \thref{FS for anisotropic groups}, one then reduces to showing the proposition for the trivial representation of \(D^\times\), for some division algebra \(D\) with center \(F\), or equivalently, its Jacquet--Langlands transfer to a representation of \(\GL_d(F)\).
	In that case, both correspondences agree with the classical Langlands correspondence: \cite[Theorem IX.7.4]{FarguesScholze:Geometrization} and \cite[§13.2]{Haines:Satake}.
\end{proof}

After attaching Satake parameters to irreducible representations of \(G(F)\) with parahoric fixed vectors \cite{Haines:Satake}, Haines conjectured that his construction is compatible with the local Langlands correspondence.
The proposition above shows this compatibility for the local Langlands correspondence constructed in \cite{FarguesScholze:Geometrization}.

\begin{cor}
	The Fargues--Scholze local Langlands correspondence satisfies \cite[Conjecture 13.1]{Haines:Satake}.
\end{cor}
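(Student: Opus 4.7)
The plan is to deduce the corollary essentially formally from \myref{Hecke algebras and Bernstein centers}, once both sides of Haines's Conjecture 13.1 are translated into the language of characters of Bernstein centers. Haines attaches to an irreducible smooth \(\pi\) of \(G(F)\) with nonzero \(\Pp(\Oo)\)-fixed vectors the following datum: the action of the center \(Z(\Hh_{\Pp})\) on \(\pi^{\Pp(\Oo)}\) is a character; composing with the (normalized) transfer map \(\widetilde{t}\colon \Hh_{\Gg^*}\to Z(\Hh_{\Pp})\) (or at Iwahori level, with \(\Theta_{\mathrm{Bern}}\circ \widetilde{t}\)) yields a character of \(\Hh_{\Gg^*}\), corresponding via the integral Satake isomorphism to a point of the GIT quotient of \(\Loc_{\CG}^{\sph}\). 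Haines's conjecture asserts that this point coincides with the image of the Fargues--Scholze parameter \(\phi_\pi^{\mathrm{FS}}\) under the natural map from \(\Loc_{\CG}^{\sph}\) to \(\Loc_{\CG}\) of \myref{closed immersion}, after passing to GIT quotients.

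First I would reduce to the case of representations with Iwahori-fixed vectors by convolving with the idempotent \(\mathbf{1}_{\Pp(\Oo)}\), which is standard and is respected by both constructions since each is defined through the center of the appropriate Hecke algebra. So assume \(\pi\) has Iwahori-fixed vectors, and let \(\chi_\pi\colon \Hh_{\Ii}\to \Lambda\) be the character describing the action on \(\pi^{\Ii(\Oo)}\) via the identification \(\Hh_{\Ii}\cong \End_{G(F)}(\cInd_{\Ii(\Oo)}^{G(F)}\Lambda)^{\op}\). By definition, \(\phi_\pi^{\mathrm{FS}}\) is the \(\widehat{G}\)-conjugacy class of parameters on which \(\Zz^{\mathrm{spec}}(G,\Lambda)\) acts through the character \(\chi_\pi\circ \Psi_G\), while Haines's Satake parameter of \(\pi\) is the point of the GIT quotient of \(\Loc_{\CG}^{\sph}\) cut out by the character \(\chi_\pi\circ \Theta_{\mathrm{Bern}}\circ \widetilde{t}\) of \(\Hh_{\Gg^*}\).

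The commutativity of the diagram in \myref{Hecke algebras and Bernstein centers}, evaluated against \(\chi_\pi\), gives precisely that these two characters of \(\Zz^{\mathrm{spec}}(G,\Lambda)\) coincide after restriction along \(\Zz^{\mathrm{spec}}(G,\Lambda)\to \Hh_{\Gg^*}\); hence they determine the same point of the GIT quotient of \(\Loc_{\CG}^{\sph}\), which is the content of the conjecture. The only genuine verification needed, and the main obstacle, is identifying the map \(\Zz^{\mathrm{spec}}(G,\Lambda)\to \Hh_{\Gg^*}\) appearing in the proposition with the parameter map from \cite[(2.1)]{Haines:SatakeCorrection} after taking GIT quotients; this is precisely the content of the remark following \myref{comparing vector bundles}, which identifies the target of Haines's map with the GIT quotient of \(\Loc_{\CG,\mathbb{C}}^{\sph}\) via the two versions of the Satake isomorphism (\myref{global sections of stack of spherical parameters} and \cite[Theorem 1.0.1]{HainesRostami:Satake}). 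Granted this identification, the corollary follows immediately.
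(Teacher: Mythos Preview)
Your proposal is correct and follows the same approach as the paper: the corollary is stated there as an immediate consequence of \thref{Hecke algebras and Bernstein centers}, with no separate proof given, and your write-up simply unpacks why that proposition yields Haines's conjecture (translating both the Fargues--Scholze parameter and Haines's Satake parameter into characters of \(\Zz^{\mathrm{spec}}(G,\Lambda)\) and \(\Hh_{\Gg^*}\), and invoking the identification of the GIT quotient of \(\Loc_{\CG}^{\sph}\) with Haines's parameter space recorded in the remark after \thref{comparing vector bundles}).
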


\begin{rmk}\thlabel{remark conjecture Haines}
	After the first draft of this paper was completed, Tom Haines pointed out to us that the above corollary was already known in the case of \(p\)-adic fields: this is shown in \cite{Li:Compatibility}, via a strategy similar to ours, although with different execution.
	Nevertheless, we have given the proof above since it fits the discussion, it works for arbitrary nonarchimedean local fields (although the arguments of \cite{Li:Compatibility} combined with \cite[Theorem D]{LiHuerta:LocalGlobal} would also show the corollary for local function fields), and we will need the precise formulation of \thref{Hecke algebras and Bernstein centers}, including the case of integral coefficients.
\end{rmk}

\section{Congruence relations}\label{Sec--Congruence}

With the above preparations at hand, we are ready to prove the Eichler--Shimura congruence relations for Hodge type Shimura varieties.
This will be done by applying the spectral action from \cite{FarguesScholze:Geometrization}, based on the approach in \cite{DvHKZ:Igusa}.
The key for this is the construction of Igusa stacks, although they will not explicitly appear below.

We fix the following setup.
Let \((\mathsf{G},\mathsf{X})\) be a Hodge type Shimura datum with Hodge cocharacter \(\mu\) and reflex field \(\mathsf{E}\).
Let \(v\) be a place of \(\mathsf{E}\) lying over \(p\), and let \(E:=\mathsf{E}_v\) and \(G=\mathsf{G}_{\IQ_p}\).
Let \(q\) denote the residue cardinality of \(E\).
Fix a special parahoric subgroup \(K_p\subseteq \mathsf{G}(\IQ_p)\), as well as an Iwahori \(K_p'\subseteq K_p\).
For any neat compact open subgroup \(K^p\subseteq \mathsf{G}(\IA_f^p)\), we let \(K:=K_pK^p\subseteq \mathsf{G}(\IA_f)\) and \(K':=K_p'K^p\).
Then the Shimura variety associated to \((\mathsf{G},\mathsf{X})\) at level \(K\) (resp.~\(K'\)) has a canonical model over \(\mathsf{E}\), and we denote its base change to \(E\) by \(\Sh_K\) (resp.~\(\Sh_{K'}\)).

Fix a prime \(\ell\neq p\) which is prime to \(|\pi_0(Z(G))|\), let \(\Lambda\) be a \(\IZ_\ell\)-algebra with a fixed square root of \(p\), and base change all stacks of Langlands parameters along \(\IZ[\frac{1}{p}]\to \Lambda\).
Denote by \(\Gg\) (resp.~\(\Ii\)) the parahoric integral model of \(G\) with \(\Gg(\IZ_p) = K_p\) (resp.~\(\Ii(\IZ_p) = K_p'\)), and by \(\Hh_{\Gg} = C_c(\Gg(\IZ_p)\backslash G(\IQ_p)/\Gg(\IZ_p),\Lambda)\) (resp.~\(\Hh_{\Ii} = C_c(\Ii(\IZ_p)\backslash G(\IQ_p)/\Ii(\IZ_p),\Lambda)\)) the corresponding Hecke algebra with \(\Lambda\)-coefficients.

\begin{rmk}
	The Hecke algebras \(\Hh_{\Gg}\) and \(\Hh_{\Ii}\) naturally act on \(R\Gamma(\Sh_{K,\overline{E}}, \Lambda)\) and \(R\Gamma(\Sh_{K',\overline{E}},\Lambda)\) respectively.
	We will always consider the \emph{right} actions of these Hecke algebras, so that the Hecke polynomial for \(\mu\) will show up, rather than \(\mu^{-1}\) \cite[Remark 1.2]{Koshikawa:Eichler-Shimura}.
\end{rmk}

In order to state and prove the Eichler--Shimura relations, we need to introduce two Hecke polynomials.
Let \(E^{\mathrm{unr}}/E\) be the maximal unramified extension, and \(\sigma_E\in \Gal(E^{\mathrm{unr}}/E)\) the arithmetic Frobenius.

\begin{dfn}
	Consider the group \(\widehat{G}^I\rtimes \Gal(E^{\mathrm{unr}}/E)\).
	It admits a unique \(\Lambda\)-linear locally free representation \(r\) such that the restriction of \(r\) to \(\widehat{G}^I\) agrees with the restriction of the representation of \(\widehat{G}\) with highest weight \(\mu\), and \(\Gal(E^{\mathrm{unr}}/E)\) acts trivially on the highest weight space.
	Let \(d\) be the rank of this representation (which agrees with the dimension of \(\Sh_K\)).
	
	In case \(G=G^*\) is quasi-split, the associated (renormalized) \emph{Hecke polynomial} is then defined as
	\[H_\mu^*:= \det(X-q^{\frac{d}{2}}r(g,\sigma_E))\in \Hh_{\Gg^*}[X],\]
	using \thref{global sections of stack of spherical parameters}.
	In general, it is defined as the image \(H_\mu:=\widetilde{t}(H_\mu^*)\in \Hh_{\Gg}[X]\) of \(H_\mu^*\) under the normalized transfer map.
\end{dfn}

\begin{rmk}
	\begin{enumerate}
		\item As we are working with integral coefficients, we have to be careful about highest weight representations.
		But since \(\mu\) is minuscule (as a character of \(\widehat{G}\)), this is not an issue, and we just mean the representation which has a unique rank 1 weight space for each element in the Weyl group orbit of \(\mu\).
		\item In case \(G\) is quasi-split, one can use the Satake equivalence to give a geometric construction of this representation. 
		For example, under the version from \cite[Corollary 9.6]{vdH:RamifiedSatake} (and after applying the étale realization functor for motives, which corresponds to forgetting the \(\IG_m\)-action in the formulation of loc.~cit.), it is the representation corresponding to the nearby cycles (in the sense of \cite[§6]{AGLR:Local}) of the shifted constant sheaf on the Schubert cell corresponding to \(\mu\), in the affine Grassmannian from \cite{FarguesScholze:Geometrization}.
		\item We only obtain an action of the unramified part of the Galois group of \(E\) on \(\widehat{G}^I\), not on \(\widehat{G}\).
		But this has the advantage that we do not need to choose a lift of Frobenius.
		\item For unramified groups, this clearly recovers the Hecke polynomial from e.g.~\cite[§6]{BlasiusRogawski:Zeta}.
	\end{enumerate}
\end{rmk}

Recall also the spectral Hecke polynomial from \cite[Definition 9.3.2]{DvHKZ:Igusa}.
For a locally free \(\Lambda\)-linear representation \(r_V\colon \widehat{G}\rtimes W_E\to \GL(V)\) and \(\gamma\in W_E\), it is defined as
\[H_{G,V,\gamma}^{\mathrm{spec}} := \det(X-r_V\circ \phi_E^{\mathrm{univ}}(\gamma)) = \sum_{i=0}^{\operatorname{rk} V} (-1)^i \chi_{\wedge^i V} X^{\operatorname{rk}V-i}\in \Zz^{\mathrm{spec}}(G,\Lambda)[X].\]
Here, \(\chi_V\in \Zz^{\mathrm{spec}}(G,\Lambda)\) is the function sending an L-parameter \(\phi\) to the trace of \(r_V\circ \phi(\gamma)\), and \(\phi_E^{\mathrm{univ}}\) is the restriction of the universal L-parameter to \(W_E\subseteq W_{\IQ_p}\).

We are now ready to prove the main theorem of this section, concerning the action of the Weil group \(W_E\) of \(E\) on the cohomology \(\mathrm{H}^i(\Sh_{K,\overline{E}}, \Lambda)\).

\begin{thm}\thlabel{Eichler-Shimura}
	For each \(i\in \IZ\), the inertia subgroup \(I_E\subseteq W_E\) acts unipotently on \(\mathrm{H}_{\mathrm{\acute{e}t}}^i(\Sh_{K',\overline{E}},\Lambda)\).
	Moreover, for any Frobenius lift \(\sigma_E\in W_E\), the action of \(\sigma_E\) on \(R\Gamma(\Sh_{K',\overline{E}},\Lambda)\) satisfies \(H_\mu(\sigma_E) = 0\).
\end{thm}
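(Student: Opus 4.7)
The plan is to follow the strategy of \cite[§9]{DvHKZ:Igusa}, which proves the unramified analogue by combining the Igusa stack construction with the Fargues--Scholze spectral action, and to supply the additional ingredients needed in the ramified setting---the stack $\Loc_{\CG}^{\sph}$ and its relation to $\Loc_{\CG}$, the integral transfer maps, and the compatibility between the Fargues--Scholze correspondence and Haines's Satake parameters---which are precisely what the previous sections provide. The first step is to choose the canonical extension $r_\mu\colon \widehat{G}\rtimes W_E\to \GL(V)$ of the minuscule representation of $\widehat{G}$ of highest weight $\mu$, in which $W_E$ acts trivially on a fixed highest weight line. Combining the Igusa stack formalism of \cite{DvHKZ:Igusa} with the Fargues--Scholze spectral action then produces the following Cayley--Hamilton type annihilation: for each $\gamma\in W_E$, the spectral Hecke polynomial $H_{G,r_\mu,\gamma}^{\mathrm{spec}}\in \Zz^{\mathrm{spec}}(G,\Lambda)[X]$, after transporting its coefficients to endomorphisms of $R\Gamma(\Sh_{K',\overline{E}},\Lambda)$ via $\Psi_G$ and evaluating $X$ at the Galois action of $\gamma$, acts as zero. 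Both parts of the theorem will be extracted from this single identity by specializing $\gamma$ and computing the image of the polynomial in $\Hh_{\Ii}[X]$.

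For the Hecke polynomial identity $H_\mu(\sigma_E)=0$, I would combine \thref{Hecke algebras and Bernstein centers}, which factors $\Zz^{\mathrm{spec}}(G,\Lambda)\to \Hh_{\Ii}$ as $\Zz^{\mathrm{spec}}(G,\Lambda)\to \Hh_{\Gg^*}\xrightarrow{\widetilde{t}}\Hh_{\Gg}\xrightarrow{\Theta_{\mathrm{Bern}}}\Hh_{\Ii}$, with \thref{comparing vector bundles}, which identifies the pullback of $\widetilde{V}$ to $\Loc_{\CG}^{\sph}$ with the bundle defined by the $\widehat{G}^I\rtimes \Gal(E^{\mathrm{unr}}/E)$-representation $r$. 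Unwinding the definitions (and tracking the $q^{d/2}$ renormalization implicit in the C-group formalism) then gives $H_{G,r_\mu,\sigma_E}^{\mathrm{spec}}\mapsto H_\mu^*\mapsto H_\mu$ under this factorization, so that the Cayley--Hamilton identity specializes to $H_\mu(\sigma_E)=0$. Note that since $\Loc_{\CG}^{\sph}$ sees inertia only through the trivial representation, the image of $H_{G,r_\mu,\sigma_E}^{\mathrm{spec}}$ in $\Hh_{\Gg^*}[X]$ is independent of the chosen Frobenius lift, as the formula for $H_\mu$ itself demands.

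For unipotence of inertia, the universal parameter on $\Loc_{\CG}^{\sph}$ is trivial on $I_E$ by the very definition of the spherical stack, so for any $\gamma\in I_E$ the polynomial $H_{G,r_\mu,\gamma}^{\mathrm{spec}}$ maps to $(X-1)^d$ in $\Hh_{\Gg^*}[X]$ and hence in $\Hh_{\Ii}[X]$. The Cayley--Hamilton identity then forces $(\gamma-1)^d$ to act as zero on each $\mathrm{H}_{\mathrm{\acute{e}t}}^i(\Sh_{K',\overline{E}},\Lambda)$, yielding unipotence. The main technical obstacle I expect is verifying that the Igusa stack arguments of \cite[§9]{DvHKZ:Igusa}, developed under the hypothesis that $K_p$ is contained in a hyperspecial subgroup, extend to the present setting of a special (possibly non-hyperspecial) parahoric $K_p$ with Iwahori $K_p'\subseteq K_p$; the remaining algebraic comparisons are exactly the content of the previous sections.
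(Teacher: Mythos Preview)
Your proposal is correct and follows essentially the same approach as the paper: both invoke the Igusa stack/spectral action machinery of \cite[§9]{DvHKZ:Igusa} (specifically Corollaries 9.1.5, 9.3.3, and 9.3.5 there) to obtain the Cayley--Hamilton annihilation, then use \thref{Hecke algebras and Bernstein centers} together with \thref{comparing vector bundles} to compute the image of the spectral Hecke polynomial in $\Hh_{\Gg}[X]$, with the Tate twist by $\tfrac{d}{2}$ accounting for the $q^{d/2}$ renormalization. The concern you raise at the end about whether the cited results from \cite{DvHKZ:Igusa} apply beyond the hyperspecial-contained setting is legitimate, but the paper itself does not supply a separate argument for this---it simply invokes those corollaries directly.
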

\begin{proof}
	Let \(\widetilde{V_\mu}\) be the vector bundle on \(\Loc_{\CG,\Lambda}\) obtained by the representation \(r_\mu\) of \(\widehat{G}\rtimes W_E\) defined in \cite[§8.4.8]{DvHKZ:Igusa}; in particular its restriction to \(\widehat{G}\) has highest weight \(\mu\).
	By \cite[Corollary 9.1.5]{DvHKZ:Igusa}, \(\pi_0\End_{\Loc_{\CG,\Lambda}}(\widetilde{V_\mu})\) acts on \(R\Gamma(\Sh_{K,\overline{E}},\Lambda)\), and the restricted action of \(W_E\subseteq \pi_0\End_{\Loc_{\CG,\Lambda}}(\widetilde{V_\mu})\) agrees with the usual \(W_E\)-action on the cohomology of \(\Sh_{K,\overline{E}}\).
	Let \(\gamma\in I_E\) be an element in the inertia subgroup.
	By \thref{Hecke algebras and Bernstein centers}, smooth \(G(\IQ_p)\)-representations generated by their Iwahori-fixed vectors yield spherical Langlands parameters.
	Since, up to \(\widehat{G}\)-conjugation, these are trivial on \(I_E\), the spectral Hecke polynomial attached to \(V_\mu\) and \(\gamma\) is \(\det(X-\identity)\).
	By \cite[Corollary 9.3.3]{DvHKZ:Igusa} (and since each \(\mathrm{H}_{\mathrm{\acute{e}t}}^i(\Sh_{K',\overline{E}},\Lambda)\) is generated by its \(K_p'\)-fixed vectors), we see that \((\gamma-\identity)^{\operatorname{rk} V_\mu}\) acts trivially on \(\mathrm{H}_{\mathrm{\acute{e}t}}^i(\Sh_{K',\overline{E}},\Lambda)\).
	In other words, \(\gamma\) acts unipotently.
	
	Next, let \(\sigma\in W_E\) be a lift of Frobenius.
	Then by \thref{Hecke algebras and Bernstein centers}, the image of \(H_{G,V_\mu,\sigma_E}^{\mathrm{spec}}\in \Zz^{\mathrm{spec}}(G,\Lambda)[X]\) in \(\Hh_{\Gg}[X]\) agrees with \(\widetilde{t}(\det(X-r_\mu\circ \phi_E^{\mathrm{univ}}(\sigma_E)))\).
	By \cite[Corollary 9.3.5]{DvHKZ:Igusa}, the evaluation of this polynomial at \(\sigma_E\) acts trivially on \(R\Gamma(\Sh_{K',\overline{E}},\Lambda)(\frac{d}{2})\).
	This implies that the renormalized Hecke polynomial \(H_\mu(\sigma_E)\) acts trivially on the non-twisted complex \(R\Gamma(\Sh_{K',\overline{E}},\Lambda)\), where we use \thref{comparing vector bundles} to relate \(H_\mu^*\) (and hence \(H_\mu\)) with \(\det(X-r_\mu\circ \phi_E^{\mathrm{univ}}(\sigma_E))\).
\end{proof}
 

\appendix

\section{The orbit of a pinning-preserving automorphism, by Sean Cotner}\label{Appendix:Sean}

This appendix aims to prove \thref{theorem:thibaud}, which is used to prove \thref{closed immersion} in the main text. If $S$ is a scheme and $\Gamma$ is a finite group acting on an $S$-group scheme $G$, then we let $Z^1(\Gamma, G)$ denote the scheme of $1$-cocycles $\Gamma \to G$, i.e., homomorphisms $\Gamma \to G \rtimes \Gamma$ whose composition with the projection $G \rtimes \Gamma \to \Gamma$ is the identity map on $\Gamma$.
Throughout this appendix, the notation will deviate from the main text, and \(G\) will denote a split reductive group over a scheme \(S\).

\begin{thm}\thlabel{theorem:thibaud}
	Let $S$ be a scheme, let $G$ be a split reductive $S$-group scheme, and let $\Gamma$ be a finite group of $S$-automorphisms preserving a common pinning of $G$. The map $i\colon G/G^\Gamma \to Z^1(\Gamma, G)$, $g \mapsto (\gamma \mapsto g\gamma(g)^{-1})$, is a closed embedding.
\end{thm}

Note that by \cite[Theorem 1.1(1)]{ALRR:Fixed}, the fixed point scheme $G^\Gamma$ is a finitely presented flat closed $S$-subgroup scheme of $G$, and thus the quotient $G/G^\Gamma$ is a finitely presented separated algebraic space by \cite[Corollary 6.3]{Artin:Versal}. The map $i$ is clearly monic, so by \cite[Proposition 8.11.5]{EGA4.3} it suffices to verify that $i$ satisfies the valuative criterion of properness. As we will see in the proof, the essential claim is Lemma~\ref{lemma:generic-pinning-gives-pinning}.

The proof of the following lemma was shown to me by Loren Spice; it will be significantly generalized to a statement about groups of pinning-preserving automorphisms in upcoming work of Adler--Lansky--Spice \cite{ALS-3}. I thank the authors for the permission to include their proof.

\begin{lem}\label{lemma:als-pinning}
	Let $k$ be an algebraically closed field, let $G$ be a connected reductive group over $k$, and let $\Gamma$ be a finite group of $k$-automorphisms which stabilizes some pinning of $G$. If $(B, T)$ is a $\Gamma$-stable Borel pair of $G$, then it extends to a $\Gamma$-stable pinning $(B, T, \{X_\alpha\})$.
\end{lem}

\begin{proof}
	Let $(B_0, T_0, \{X_\alpha\})$ be a $\Gamma$-stable pinning; it is clearly enough to show that $(B, T)$ is $G^\Gamma(k)$-conjugate to $(B_0, T_0)$. By \cite[Proposition 3.5(iii)]{AdlerLansky:Lifting}, the groups $(T \cap G^\Gamma)^0_{\redu}$ and $(T_0 \cap G^\Gamma)^0_{\redu}$ are both maximal tori of $(G^\Gamma)^0_{\redu}$ and we have $T = Z_G((T \cap G^\Gamma)^0_{\redu})$ and $T_0 = Z_G((T_0 \cap G^\Gamma)^0_{\redu})$, so by $G(k)$-conjugacy of maximal tori in $(G^\Gamma)^0_{\redu}$ we see that $T$ is $G^\Gamma(k)$-conjugate to $T_0$. We may therefore assume $T = T_0$.
	
	Next, recall that the map $w \mapsto wBw^{-1}$ defines a bijection from the Weyl group $W(G, T)$ to the set of Borel subgroups of $G$ containing $T$. This bijection is $\Gamma$-equivariant, so it defines a bijection from $W(G, T)^\Gamma$ to the set of $\Gamma$-stable Borel subgroups of $G$ containing $T$. By \cite[Corollary 15]{AdlerLansky:Root}, the natural map $W((G^\Gamma)^0_{\redu}, T \cap (G^\Gamma)^0_{\redu}) \to W(G, T)^\Gamma$ is bijective, so it follows that there is some $g \in N_{G^\Gamma}(T \cap (G^\Gamma)^0_{\redu})(k)$ such that $gBg^{-1} = B_0$, as desired.
\end{proof}

\begin{lem}\label{lemma:chevalley-steinberg-replacement}
	Let $k$ be an algebraically closed field, let $G$ be a connected reductive group over $k$, and let $\Gamma$ be a finite group of automorphisms preserving a Borel pair $(B, T)$ of $G$. The map $Z^1(\Gamma, T)/\!/N_G(T) \to Z^1(\Gamma, G)/\!/G$ is radicial, i.e., it is injective on $k$-points.
\end{lem}

\begin{proof}
	Let $\alpha, \beta\colon \Gamma \to T$ be two $1$-cocycles which are $G(k)$-conjugate. Note that $\alpha$ and $\beta$ induce actions of $\Gamma$ on $G$ which preserve the pair $(B, T)$. If $g\beta g^{-1} = \alpha$, then in particular $gT^\Gamma g^{-1} \subset G^{\alpha(\Gamma)}$: indeed, $\beta(\Gamma)$ centralizes $T^\Gamma$, so $\alpha(\Gamma) = g\beta(\Gamma)g^{-1}$ centralizes $gT^\Gamma g^{-1}$. Thus by conjugacy of maximal tori in the algebraic $k$-group $G^{\alpha(\Gamma)}$, there is some $h \in G^{\alpha(\Gamma)}(k)$ such that $hg$ normalizes $T^\Gamma$ and $hg\beta g^{-1} h^{-1} = \alpha$. Thus we may replace $g$ by $hg$ to assume $g$ normalizes $T^\Gamma$. By \cite[Proposition 3.5(iii)]{AdlerLansky:Lifting}, we have $T = Z_G(T^\Gamma)$, so in fact $g$ normalizes $T$ and we are done.
\end{proof}

\begin{lem}\label{lemma:generic-pinning-gives-pinning}
	Let $A$ be a discrete valuation ring with fraction field $K$. If $G$ is a split reductive $A$-group scheme and $\Gamma$ is an $A$-automorphism of $G$ which preserves a pinning of $G_K$, then, after possibly extending $A$, there is a pinning of $G$ preserved by $\Gamma$.
\end{lem}

\begin{proof}
	Let $(B_0, T_0, \{X_\alpha\})$ be a pinning of $G_K$ preserved by $\Gamma$. By properness of the scheme of Borels \cite[Theorem 5.2.11(3)]{Conrad:Reductive}, there is a Borel $A$-subgroup $B$ of $G$ with generic fiber $B_0$, and this $B$ is evidently $\Gamma$-stable. Let $(G^\Gamma)'$ denote the schematic closure of $((G_K)^\Gamma)^0_{\redu}$ in $G$; by extending $A$, we may and do assume that $((G_K)^\Gamma)^0_{\redu}$ is a smooth $K$-group scheme, so $(G^\Gamma)'$ is a closed $A$-subgroup scheme of $G$ with connected reductive generic fiber. By \cite[Theorem 1.1(1) and (4)]{ALRR:Fixed}, the $A$-group scheme $(G^\Gamma)'$ is quasi-reductive in the sense of \cite{PrasadYu:Quasi}. By \cite[Proposition 3.4]{PrasadYu:Quasi}, after possibly extending $A$ we may assume that the normalization $\widetilde{(G^\Gamma)'}$ is a reductive $A$-group scheme. Let $\pi\colon \widetilde{(G^\Gamma)'} \to (G^\Gamma)'$ be the normalization morphism, so $\pi_K$ is an isomorphism and $\pi_k$ is a unipotent isogeny by \cite[Proposition 4.3]{PrasadYu:Quasi}. By \cite[Proposition 3.5(ii)]{AdlerLansky:Lifting}, the pullback $\pi^{-1}(B_0)$ is a Borel $K$-subgroup of $\widetilde{(G^\Gamma)'}_K$. Since $\widetilde{(G^\Gamma)'}$ is reductive, another application of properness of the scheme of Borels shows that there is a unique Borel $A$-subgroup $B_1$ of $\widetilde{(G^\Gamma)'}$ with generic fiber $\pi^{-1}(B_0)$. Let $T_1 \subset B_1$ be a maximal $A$-torus, and note that the restriction map $\pi|_{T_1} \colon T_1 \to G$ is monic by applying the fibral isomorphism criterion to the identity section $\Spec A \to \ker \pi|_{T_0}$ (using that $\pi_k$ has unipotent kernel). By \cite[Proposition 3.5(iii)]{AdlerLansky:Lifting}, the centralizer $T \coloneqq Z_G(\pi(T_1))$ has fibers which are maximal subtori of the fibers of $G$, and it follows that $T$ is a $\Gamma$-stable maximal $A$-torus in $G$.
	
	By Lemma~\ref{lemma:als-pinning} and the fact that $\Gamma$ preserves \textit{some} pinning of $G_K$, it follows that $\Gamma$ preserves $(B, T, \{X_\alpha\}_{\alpha \in \Delta})$ for some choice of basis elements $X_\alpha \in (\Lie U_\alpha)(K)$. We claim that it is possible to choose each $X_\alpha$ to be an $A$-basis element of $(\Lie U_\alpha)(A)$. Indeed, choose a subset $\Sigma$ of $\Delta$ such that the map $\Sigma \to \Delta/\Gamma$ is an isomorphism, and for each $\alpha \in \Sigma$ let $c_\alpha \in A$ be such that $c_\alpha X_\alpha \in (\Lie U_\alpha)(A)$ is an $A$-basis element. Note that $\gamma(c_\alpha X_\alpha) \in (\Lie U_{\gamma(\alpha)})(A)$ for all $\gamma \in \Gamma$ since $\gamma$ is an $A$-automorphism of $G$. By considering $\gamma_k$, we see that $\gamma(c_\alpha X_\alpha)$ has nonzero image in $(\Lie U_{\gamma(\alpha)})(k)$. Thus we may replace $X_{\gamma(\alpha)}$ by $c_\alpha X_{\gamma(\alpha)}$ for all $n \geq 0$, all $\gamma \in \Gamma$, and all $\alpha \in \Sigma$, and the result follows.
\end{proof}

\begin{proof}[Proof of Theorem~\ref{theorem:thibaud}]
	Using the Existence and Isomorphism Theorems, we may and do assume $S = \Spec \bZ$. In this case, to verify the valuative criterion of properness we must show that if $A$ is a discrete valuation ring with fraction field $K$ and $\vp \in Z^1(\Gamma, G)(A)$ and $x \in G(K)$ are such that $x \cdot \gamma(x)^{-1} = \vp(\gamma)$ for all $\gamma \in \Gamma$, then (after possibly extending $A$), there is some $y \in G(A)$ such that $y \cdot \gamma(y)^{-1} = \vp(\gamma)$ for all $\gamma$.
	
	We first show that we can find $y_0$ such that the images of $y_0 \cdot \gamma(y_0)^{-1}$ and $\vp(\gamma)$ in the adjoint quotient of $G$ are equal for all $\gamma$. In other words, we will show that if $\theta\colon \Gamma \to \Aut_{G/A}$ is the homomorphism defined by $\theta(\gamma)\coloneqq \Ad_g \circ \gamma$ and $\theta$ is $G(K)$-conjugate to the given homomorphism $\beta\colon \Gamma \to \Aut_{G/S}$ (in the sense that $\theta(\gamma) = \Ad_{y_1} \circ \gamma \circ \Ad_{y_1}^{-1}$ for some $y_1 \in G(K)$), then (after possibly extending $A$) the homomorphism $\theta$ is $G(A)$-conjugate to $\beta$. Since $\theta$ is $G(K)$-conjugate to $\beta$, it preserves a pinning of $G_K$. By Lemma~\ref{lemma:generic-pinning-gives-pinning}, it follows that $\beta$ preserves a pinning of $G$. By the conjugacy of pinnings, we may therefore assume that $\theta$ and $\beta$ preserve a common pinning. Since $\theta$ and $\beta$ also factor through a common component of $\Aut_{G/A}$ (by $G(K)$-conjugacy), it follows from \cite[Proposition 7.1.6]{Conrad:Reductive} that in this case $\theta = \beta$, as desired.
	
	Using the previous paragraph, we may now assume that $\vp(\gamma)$ is central for each $\gamma \in \Gamma$. In particular, the map $\gamma \mapsto \vp(\gamma)$ is a $1$-cocycle $g\colon\Gamma \to T$. By Lemma~\ref{lemma:chevalley-steinberg-replacement}, since $g$ is $G$-conjugate to the constant map $g \mapsto 1$, after extending $A$ we may assume there is some $n_0 \in N_G(T)(K)$ such that $n_0 \cdot \gamma(n_0)^{-1} = \vp(\gamma)$ for all $\gamma \in \Gamma$. By \cite[Corollary 15]{AdlerLansky:Root}, after extending $A$ further there is some $n_1 \in N_G(T)(K)^\Gamma$ with the same image as $n_0$ in $W$. In particular, if $t = n_0 n_1^{-1}$, then $t \in T(K)$ and $t \cdot \gamma(t)^{-1} = \vp(\gamma)$ for all $\gamma \in \Gamma$.
	
	By \cite[Expos\'e IX, Th\'eor\`eme 6.8]{SGA3.2}, the $A$-homomorphism $f\colon T \to Z^1(\Gamma, T)$ given by $t \mapsto (t\cdot\gamma(t)^{-1})_{\gamma \in \Gamma}$ factors through a faithfully flat homomorphism $T \to T'$, where $T'$ is the (closed) schematic image of $f$. By the previous paragraph we have $(\vp(\gamma))_{\gamma \in \Gamma} \in f(T(K))$, so also $(\vp(\gamma)) \in T'(A)$ and thus (by faithful flatness) after passing to an extension of $A$ there is some $t \in T(A)$ such that $t\cdot\gamma(t)^{-1} = \vp(\gamma)$ for all $\gamma \in \Gamma$, as desired.
\end{proof}

\bibliographystyle{alphaurl}
\bibliography{bib}

\end{document}